\newtheorem{definition}{Definition}
\newtheorem{theorem}{Theorem}
\newtheorem{lemma}{Lemma}
\newtheorem{remark}{Remark}
 \newcommand{\D}{\mathrm{d}}
\newcommand{\pd}[2]{\frac{\partial #1}{\partial #2}}
\newcommand{\td}[2]{\frac{\D #1}{\D #2}}
\newcommand{\dx}{\mathrm{d}\mathbf{x}}
\newcommand{\dt}{\mathrm{d}t}
\newcommand{\R}{\mathbb{R}}
\newcommand{\N}{\mathbb{N}}
\newcommand{\Ni}{\mathcal N_i}
\newcommand{\Nis}{\mathcal N_i\backslash\{i\}}
\newcommand{\beq}{\begin{equation}}
\newcommand{\eeq}{\end{equation}}
\def\ps@pprintTitle{%
  \let\@oddhead\@empty
  \let\@evenhead\@empty
  \def\@oddfoot{
    \footnotesize\itshape
    \ifx\@journal\@empty Elsevier
    \else\@journal\fi
    \hfill\today
  }%
  \let\@evenfoot\@oddfoot}
\newcommand{\est}[1]{\left\langle#1\right\rangle}
\begin{document}

  \begin{frontmatter}

    \title{{\Large  \bf Consistency and convergence of flux-corrected finite element methods  for nonlinear hyperbolic problems}}

	\author[TUD]{D. Kuzmin}
	\ead{Dmitri.Kuzmin@math.tu-dortmund.de}
	\author[JG]{M.  Luk{\'a}{\v{c}}ov{\'a}-Medvid’ov{\'a}}
	\ead{lukacova@mathematik.uni-mainz.de}
	\author[JG]{P.  \"Offner\corref{cor1}}
	\ead{mail@philippoeffner.de}
\cortext[cor1]{Corresponding author}
	\address[TUD]{Institute of Mathematics, Technical University Dortmund, Dortmund, Germany}
	\address[JG]{Institute of Mathematics, Johannes Gutenberg University, Mainz, Germany }

     \begin{abstract}
       We investigate the consistency and convergence of flux-corrected finite element approximations in the context of nonlinear hyperbolic conservation laws. In particular, we focus on a monolithic convex limiting approach and prove a Lax--Wendroff-type theorem for the corresponding semi-discrete problem. A key component of our analysis is the use of a weak estimate on bounded variation, which follows from the semi-discrete entropy stability property of the method under investigation. For the Euler equations of gas dynamics, we prove the weak convergence of the flux-corrected finite element scheme to a  dissipative weak solution.
      If a strong solution exists, the sequence of numerical approximations converges strongly to the strong solution.
             \end{abstract}

     \begin{keyword}
Finite Element Methods; Monolithic Convex Limiting; Lax-Wendroff Theorem; Convergence; Dissipative Weak Solutions; Hyperbolic Conservation Laws; Euler Equations
     \end{keyword}

  \end{frontmatter}

\section{Introduction}
\label{sec:intro}


Hyperbolic conservation laws are omnipresent in many problems arising in science and engineering. They describe time evolution of conserved quantities, such as mass, energy, momentum or concentration of chemical species. Because of their practical importance, nonlinear hyperbolic problems have been subject of extensive analytical and numerical studies; see, e.g.,  Dafermos~\cite{dafermosbook}, Warnecke~\cite{warneckebook}, Smoller~\cite{smollerbook}, Abgrall and Shu~\cite{handbook1,handbook2} and the references therein.

The range of states that exact solutions of a hyperbolic initial value problem may attain is often known to be a subset of a convex \emph{invariant domain}. For the Euler equations of gas dynamics, this generalized maximum principle guarantees positivity of the density and internal energy. Moreover, the validity of entropy inequalities and entropy minimum principles can be shown for admissible solutions. A fundamental criterion for the design of robust numerical methods is preservation of physically relevant properties at the (semi-)discrete level. Guermond and Popov \cite{guermond2016} showed that this requirement is met for a continuous finite element counterpart of the local Lax--Friedrichs method. Recent years have witnessed significant advances in the development of high-order extensions that use the framework of \emph{algebraic flux correction} \cite{afc2} to enforce invariant domain preservation \cite{guermond2018,convexCG,convexHO} and entropy stability \cite{entropyFD,entropySD,entropyHO}. A review of such high-resolution schemes and of the underlying theory can be found in \cite{bookWS}.

Convergence analysis of numerical methods for nonlinear hyperbolic problems has typically been restricted to scalar conservation laws; see, e.g., Kr\"oner~\cite{kroenerbook}, Feistauer~\cite{feistauerbook}, Coquel et al.~\cite{coquel1991, coquel1993} and the references therein. As demonstrated in a pioneering work of
Tadmor~\cite{tadmor}, a  crucial property for nonlinear stability analysis of numerical schemes for hyperbolic conservation laws is the discrete entropy inequality. Recently, a breakthrough in the convergence analysis of numerical methods for systems of hyperbolic conservation laws has been achieved by considering very weak, so-called measure-valued solutions. For details we refer to  Fjordholm, Mishra, Tadmor et al. \cite{fjordholm_3,FKMT17} and Feireisl, Luk\'a\v{c}ov\'a-Medvid'ov\'a et al. \cite{feireisl2020b, FLM20, feireisl2021}.

The aim of the present paper is to analyze consistency and convergence of flux-corrected finite element methods that use \emph{monolithic convex limiting} techniques (as proposed by Kuzmin et al.~\cite{convexCG,convexHO,entropyFD, entropySD}) to ensure nonlinear stability. Two main results are as follows:
\begin{itemize}
\item Using the entropy stability property, we derive weak BV estimates that enable us to prove a Lax--Wendroff-type theorem for spatial
  semi-discretizations; see Section~4.

\item For the multi-dimensional Euler equations of gas dynamics, we prove weak convergence of flux-corrected approximations to dissipative weak solutions; see
Section~5.
\end{itemize}

We conclude this introductory section by specifying our notation. In what follows, we consider nonlinear
hyperbolic initial value problems that can be written in the compact form
\begin{subequations}\label{ibvp}
\begin{alignat}{3}
  \pd{u}{t}+\nabla\cdot\bm{f}(u)&=0 &&\quad\mbox{in}\ \Omega\times (0,T),
    \label{ibvp-pde}\\
     u&=u_0 &&\quad\mbox{in}\ \Omega\ \mbox{at}\ t=0.
\end{alignat}
\end{subequations}
Here $u(\mathbf{x},t)$ is a vector of $m$ conserved quantities,
$\bm{f}(u)$ is an  array of $m$ flux functions, and
$u_0$ is the initial data. On the boundary $\partial\Omega$ of a domain $\Omega\subset\R^d$, $d =1,2,3$,
we prescribe periodic boundary conditions.

A convex set $\mathcal G\subset\R^m$ is called an \emph{invariant domain} if any exact solution of
\eqref{ibvp} satisfies
$$u(\mathbf{x},t)\in\mathcal G\qquad \forall (\mathbf{x},t)\in \bar\Omega\times [0,T].
$$

Let $\eta(u)$  be a convex function and $\bm{q}(u)$ be a vector field such that
$\bm q'(u)=\eta'(u)^\top\bm f'(u)$. Then $\{\eta(u),\bm{q}(u)\}$ is
called an \emph{entropy pair} and (a weak form of) the entropy inequality
\beq
\pd{\eta(u)}{t}+\nabla\cdot\bm{q}(u)\le0\quad\mbox{in}\ \Omega\times (0,T)
\eeq
holds. The components of the gradient
$v(u)=\eta'(u)$ are called \emph{entropy variables}.
For further reference, we also define the vector-valued \emph{entropy
 potential} $\boldsymbol{\psi}(u)=v(u)^\top\bm{f}(u)-\bm{q}(u)$.
\medskip

An important hyperbolic problem of the form \eqref{ibvp} is the compressible Euler system of gas dynamics. Our theoretical investigations are focused on proving convergence for this particular flow model.
 The Euler system consists of  conservation laws for the density $\rho$, momentum $\bm{m}=\varrho \bm{v}$ and total energy
$E=\frac{1}{2} \varrho |\bm{v}|^2 +\varrho e.$ Here $\bm{v}:=(v_1, \dots, v_d)^\top$ and $e$ stand for the velocity vector in $\R^d$ and  the specific internal energy, respectively. In the compact form \eqref{ibvp}, the vector of conservative variables and the flux vector functions are denoted by
$$
u=(\varrho, \bm{m}, E)^\top, \ \bm{f}_m=(\varrho v_m, v_m \bm{m} +p \bm{e}_m, v_m(E+p))^\top, m=1, \dots, d,
$$
respectively. Here $\bm{e}_m$ represents the $m$th row of the unit matrix.
To determine the pressure $p$, we use the equation of state
$p=(\gamma-1)\varrho e$ for an ideal gas with heat capacity ratio $\gamma>1$.

The mathematical entropy $\eta$ and thermodynamic entropy   $s:=\log \frac{p}{\varrho^{\gamma}}$ are related by
\begin{equation}\label{eq:entropy_convex}
\eta=- \frac{\varrho  s}{\gamma-1}.
\end{equation}
The entropy flux $\bm{q}:=(q_1, \dots, q_d)$ corresponding to $\eta$
is given by $q_m=\eta v_m$, $m=1, \dots, d$.

\section{Monolithic convex limiting}
\label{sec:limiting}

We discretize \eqref{ibvp-pde} in space using
  a continuous piecewise linear
  finite element
  approximation on a conforming mesh $\mathcal T_h$. For simplicity,
  we assume that $\bigcup_{K\in\mathcal T_h}K=\bar\Omega$.
  The space
$$V_h=\{v\in C(\bar\Omega)\,:\, v|_K\in\mathbb{P}_1(K)\ \forall
K\in\mathcal T_h\}$$ is spanned by Lagrange basis
functions $\phi_{{1}},\ldots,\phi_N$.
Introducing
the approximations
$$
u_h=\sum_{j=1}^{N_h}u_j\phi_j\approx
u,
\qquad \bm{f}_h
=\sum_{j=1}^{N_h}\bm{f}_j\phi_j
\approx \bm{f}(u_h),
$$
where $\bm{f}_j=\bm{f}(u_j)$ for $j=1,\ldots,N_h$, we
consider the semi-discrete problem
\beq
\int_{\Omega}\phi_h\left(\pd{u_h}{t}+\nabla\cdot\bm{f}_h\right)\dx=0
\qquad\forall \phi_h\in V_h.
\eeq
Using $\phi_h\in\{\phi_1,\ldots,\phi_{N_h}\}$,
  we obtain the system of differential-algebraic equations
\beq\label{galerkin-dae}
\sum_{j\in\mathcal N_i}\left(m_{ij}\td{u_j}{t}+\bm{f}_j\cdot
\bm{c}_{ij}\right)=0,
  \qquad i=1,\ldots,N_h.
 \eeq
  The integer set $\mathcal N_i$ contains the indices
  of all nodes $j$ such that the basis functions
  $\phi_i$ and $\phi_j$ have overlapping supports.
The coefficients
$m_{ij}$ and $\bm{c}_{ij}$ are defined by
$$
m_{ij}=\int_\Omega\phi_i\phi_j\dx,\qquad
\bm{c}_{ij}=\int_{\Omega}\phi_i\nabla\phi_j\dx.
$$
Using integration by parts and the assumption of periodic boundary conditions, we find that
\beq\label{prop:cijdij}
\bm{c}_{ii}=\bm{0},\qquad
\bm{c}_{ij}=-\bm{c}_{ji}\qquad
\forall i,j\in\{1,\ldots,N_h\}.
\eeq

The Lagrange basis functions possess the partition of unity
property $\sum_{j=1}^{N_h}\phi_j\equiv 1$. Therefore,
the coefficients $\bm{c}_{ij}$ of the discrete
gradient operator satisfy
\beq\label{prop:cij}
\sum_{j\in\mathcal N_i\backslash\{i\}}\bm{c}_{ij}
=\sum_{j\in\mathcal N_i}\bm{c}_{ij}=\bm{0}.
\eeq
 It follows that
an equivalent representation of system
\eqref{galerkin-dae} is given by (see, e.g., \cite[Ch.~6]{bookWS})
\beq\label{galerkin-split}
m_i\td{u_i}{t}=\sum_{j\in\mathcal N_i\backslash\{i\}}
[d_{ij}(u_j-u_i)-(\bm{f}_j-\bm{f}_i)
  \cdot\bm{c}_{ij}+f_{ij}],
\eeq
where $m_i=\sum_{j\in\Ni}m_{ij}=\int_\Omega\phi_i\dx>0$ is a diagonal entry of the
lumped mass matrix and $d_{ij}$ is an artificial viscosity
coefficient (to be defined below). The antidiffusive flux
$$
f_{ij}=m_{ij}(\dot u_i-\dot u_j)+d_{ij}(u_i-u_j)=-f_{ji},\qquad
j\in\mathcal N_i\backslash\{i\}
$$
corrects the mass lumping error and offsets the diffusive flux
$d_{ij}(u_j-u_i)$ in \eqref{galerkin-split}.
In view of~\eqref{galerkin-dae}, the
nodal time derivatives $\dot u_j:=\td{u_j}{t}$ can be
calculated by solving linear system
$$
\sum_{j\in\mathcal N_i}m_{ij}\dot u_j=-\sum_{j\in\mathcal N_i}\bm{f}_j\cdot
\bm{c}_{ij},
  \qquad i=1,\ldots,N_h.
  $$

 Let $\lambda(u_L,u_R,\bm{n})$ denote an upper bound for
 the maximum speed of a Riemann problem with the flux function
 $\bm{f}(u)\cdot\bm{n}$ and initial states
 $u_L,u_R\in\mathcal G$. In the low-order method
 analyzed by Guermond and Popov \cite{guermond2016},
 the artificial viscosity coefficients
 $$
 d_{ij}=\begin{cases}
 \max(\lambda_{ij}|\bm{c}_{ij}|,\lambda_{ji}|\bm{c}_{ji}|) & \mbox{if}\ j\in \Nis,\\
\sum_{k\in\Nis}d_{ik} & \mbox{otherwise}
\end{cases}
 $$
 are defined using
 $\lambda_{ij}=
 \lambda(u_i,u_j,\bm{c}_{ij}/|\bm{c}_{ij}|)$, where
 $|\cdot|$ denotes the Euclidean norm in $\R^d$.

The flux-corrected space
discretizations that we consider in this work replace
\eqref{galerkin-split} with
\begin{align}
m_{i}\td{u_i}{t}&=
\sum_{j\in\mathcal N_i\backslash\{i\}}[d_{ij}(u_j-u_i)-
  (\bm{f}_j-\bm{f}_i)\cdot \bm{c}_{ij}+f_{ij}^*]\nonumber\\
&= \sum_{j\in\mathcal N_i\backslash\{i\}}[2d_{ij}(\bar u_{ij}-u_i)+f_{ij}^*]=
 \sum_{j\in\mathcal N_i\backslash\{i\}}2d_{ij}(\bar u_{ij}^*-u_i),\label{mcl}
\end{align}
where $f_{ij}^*=-f_{ji}^*$ is an inequality-constrained approximation to the target flux $f_{ij}$.
We calculate $f_{ij}^*$ using the monolithic convex limiting (MCL)
algorithm proposed in \cite{convexCG}. If $\mathcal G$ is an invariant domain containing
the states $u_i$ and $u_j$, then the MCL approach guarantees that
\beq\label{property-idp}
u_i,u_j\in\mathcal G\quad\Rightarrow\quad
\bar u_{ij}=\frac{u_j+u_i}{2}-\frac{
  (\bm{f}_j-\bm{f}_i)\cdot\bm{c}_{ij}}{2d_{ij}}\in \mathcal G\quad\Rightarrow\quad
\bar u_{ij}^*=\bar u_{ij}+\frac{f_{ij}^*}{2d_{ij}}\in \mathcal G.
\eeq

Sufficient conditions for entropy stability of finite difference and finite volume schemes were formulated by Tadmor~\cite{tadmor87,tadmor} and  Ray et al.~\cite{ray}. Adapting their analysis to the finite element setting, Kuzmin and Quezada de Luna \cite{entropySD} found that the semi-discrete scheme \eqref{mcl} is entropy conservative or dissipative w.r.t. an entropy pair $\{\eta(u),\bm{q}(u)\}$ if
\beq\label{estab}
  \frac{(v_i-v_j)^\top}{2}[d_{ij}(u_j-u_i)-
   (\bm{f}_j+\bm{f}_i)\cdot\bm{c}_{ij}+f_{ij}^*]
  \le (\boldsymbol{\psi}_j-\boldsymbol{\psi}_i)\cdot\bm{c}_{ij}.
  \eeq
We limit the fluxes $f_{ij}^*$ in a way that ensures the validity of this stability condition \cite{bookWS,entropyFD,entropySD}.

\section{Stability analysis}
\label{sec:stability}

The following theorems guarantee that MCL scheme \eqref{mcl} is invariant domain preserving (IDP) and entropy stable if conditions
\eqref{property-idp} and \eqref{estab} hold for $f_{ij}^*=-f_{ji}^*$.

  \begin{theorem}[IDP criterion for spatial semi-discretizations \cite{timelim}]
 \label{thm:IDP}   Let
    $$G=\{v\in\R^N:\ v_i\in [\alpha,\beta],\ i=1,\ldots,N\}.$$
    Consider an initial value problem of the form
  \begin{align} \label{timelim:ivp}
    \td{u_i}{t}  = a_i(u)(g_i(u) - u_i),\qquad  u_i(0) & = u_i^0,
    \qquad i=1,\ldots,N.
  \end{align}
  Suppose that \eqref{timelim:ivp}
  has a unique solution for $t\in[0,T]$. Assume that $u(0)\in G$ and
  \[
  u\in G\quad\Rightarrow\quad g_i(u)\in [\alpha,\beta],\quad
   0 \le a_i(u) \le C\quad \forall i\in\{1,\ldots,N\},
   \]
where $C>0$ is independent of $u$. Then $u(t)\in G$ for all $t\in[0,T]$.
  \end{theorem}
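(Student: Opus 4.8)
The plan is to prove the invariant-domain-preservation property by a continuity/maximum-principle argument applied to the scalar ODE system \eqref{timelim:ivp}. The key observation is that the right-hand side $a_i(u)(g_i(u)-u_i)$ points "inward" whenever the $i$-th component $u_i$ sits on the boundary of the interval $[\alpha,\beta]$: if $u_i=\beta$ and the remaining components keep $u\in G$, then $g_i(u)\le\beta=u_i$, so $\td{u_i}{t}=a_i(u)(g_i(u)-u_i)\le 0$; symmetrically, if $u_i=\alpha$ then $\td{u_i}{t}\ge 0$. This is exactly the sign condition one needs for a comparison principle to confine the trajectory to the box $G$.

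\medskip

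\noindent\textbf{Main steps.} First I would rewrite \eqref{timelim:ivp} in the perturbed form
\beq
\td{u_i^\varepsilon}{t}=a_i(u^\varepsilon)\bigl(g_i(u^\varepsilon)-u_i^\varepsilon\bigr)+\varepsilon\Bigl(\tfrac{\alpha+\beta}{2}-u_i^\varepsilon\Bigr),\qquad u_i^\varepsilon(0)=u_i^0,
\eeq
for small $\varepsilon>0$, so that the vector field points \emph{strictly} inward on $\partial G$. Then I would argue by contradiction: suppose $t^\ast=\inf\{t:u^\varepsilon(t)\notin G\}\le T$; by continuity $u^\varepsilon(t^\ast)\in\partial G$, so some component, say $u_k^\varepsilon(t^\ast)=\beta$ (the case $\alpha$ is symmetric), while $u^\varepsilon(t^\ast)\in G$. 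Evaluating the perturbed ODE at $t^\ast$, the hypothesis $g_k(u^\varepsilon(t^\ast))\le\beta$ together with $a_k\ge 0$ gives $\td{u_k^\varepsilon}{t}(t^\ast)\le -\varepsilon\tfrac{\beta-\alpha}{2}<0$, so $u_k^\varepsilon$ is strictly decreasing near $t^\ast$ and cannot have just exited $[\alpha,\beta]$ from above — contradicting the definition of $t^\ast$. Hence $u^\varepsilon(t)\in G$ for all $t\in[0,T]$. Finally I would let $\varepsilon\to 0$: the bound $0\le a_i\le C$ (uniform in $u$) together with boundedness of $g_i$ on $G$ gives a uniform Lipschitz/Grönwall estimate, so $u^\varepsilon\to u$ uniformly on $[0,T]$ by standard ODE continuous dependence, and the closed set $G$ is preserved in the limit.

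\medskip

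\noindent\textbf{Main obstacle.} The delicate point is the boundary sign argument at a component that has reached $\partial[\alpha,\beta]$ while the full state is still in $G$: one must invoke the hypothesis $g_i(u)\in[\alpha,\beta]$ precisely at such configurations, and the uniform bound $a_i(u)\le C$ is what makes the $\varepsilon$-perturbation harmless in the limit. A purely unperturbed argument would have to contend with the possibility of the trajectory grazing the boundary tangentially, which is why introducing the strictly-inward $\varepsilon$-flow is the cleanest route; the uniform-in-$u$ bound on $a_i$ is exactly the ingredient that guarantees the limiting trajectory exists on all of $[0,T]$ and stays in the closed box. I do not expect any genuinely hard estimates — the content is entirely in setting up this invariance-under-perturbation scheme correctly.
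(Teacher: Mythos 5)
The paper does not prove this theorem itself; it simply cites \cite[Theorem~1]{timelim}, so there is no in-paper argument to compare against line by line. Your Nagumo-type invariance argument --- perturb the vector field by $\varepsilon\bigl(\tfrac{\alpha+\beta}{2}-u_i^\varepsilon\bigr)$ so that it points strictly inward on $\partial G$, rule out a first exit time $t^\ast$ by the sign of $\dot u_k^\varepsilon(t^\ast)$ at a boundary component, and let $\varepsilon\to 0$ --- is a standard and essentially correct route to such box-invariance results, and you invoke the hypotheses $g_k(u)\in[\alpha,\beta]$, $a_k(u)\ge 0$ at exactly the right configurations (namely $u\in G$ with $u_k\in\{\alpha,\beta\}$, which is where they are available since $G$ is closed). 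One small repair: at $t^\ast$ several components may sit on $\partial[\alpha,\beta]$ simultaneously, so the strict-inward-derivative argument must be applied to all boundary components at once, combined with continuity for the interior components, to conclude $u^\varepsilon(t)\in G$ on $[t^\ast,t^\ast+\delta]$ and reach the contradiction; this is a trivial extension of what you wrote.

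The one genuine gap is regularity. The theorem as stated assumes only that the \emph{unperturbed} problem has a unique solution on $[0,T]$; it does not assume that $a_i$ and $g_i$ are continuous, let alone Lipschitz. Your scheme needs (i) local existence of solutions of the perturbed system and (ii) convergence $u^\varepsilon\to u$ as $\varepsilon\to 0$, for which you invoke ``standard continuous dependence.'' Neither follows from bare well-posedness of the original IVP: (i) requires at least continuity of the right-hand side, and (ii), as usually stated, requires a Lipschitz condition (alternatively, continuity of the right-hand side together with the uniform bounds $|\dot u_i^\varepsilon|\le C(\beta-\alpha)+\mathcal O(\varepsilon)$, an Arzel\`a--Ascoli extraction, and the assumed uniqueness of the limit problem would suffice). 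This caveat is consistent with the Remark following the theorem, which notes that the well-posedness hypothesis can be traded for Lipschitz continuity of the right-hand side; under that stronger hypothesis your proof closes completely, but against the literal hypotheses of the theorem the $\varepsilon$-perturbation step is not fully justified.
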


  \begin{proof} See \cite[Theorem~1]{timelim}.
  \end{proof}

  \begin{remark}
    The assumption of well-posedness can be replaced with the requirement that the right-hand side
    of the (nonlinear) ODE system \eqref{timelim:ivp} be a Lipschitz-continuous function \cite{timelim}.
  \end{remark}

    For our semi-discrete MCL scheme \eqref{mcl}, the functions $a_i(u)$ and $g_i(u)$ are defined by
    $$
a_i(u)= \sum_{j\in\mathcal N_i\backslash\{i\}}2d_{ij},\qquad g_i(u)=\frac{1}{a_i(u)} \sum_{j\in\mathcal N_i\backslash\{i\}}2d_{ij}\bar u_{ij}^*.
$$

If the discretization in time is performed using an explicit strong stability preserving (SSP) Runge--Kutta method, then the forward Euler stages
\beq\label{uispp}
u_i^{\rm SSP}=u_i+\frac{\Delta t}{m_i}
\sum_{j\in\mathcal N_i\backslash\{i\}}2d_{ij}(\bar u_{ij}^*-u_i)
=\left(1-\frac{\Delta ta_i(u)}{m_i}\right)u_i+
\left(\frac{\Delta ta_i(u)}{m_i}\right)g_i(u)
\eeq
produce convex combinations of $u_i$ and $g_i(u)$
under the time step restriction
\beq\label{uispp-cfl}
\Delta ta_i(u)\le m_i\qquad \forall i\in\{1,\ldots,N\}.
\eeq
 This proves the conditional IDP property of the fully discrete scheme.
For the case $\bar u_{ij}^*=\bar u_{ij}$, such
a convexity-based proof was presented
by Guermond and Popov \cite{guermond2016}.
\bigskip

In the scalar ($m=1$) case, the semi-discrete scheme  \eqref{mcl} is
$L^\infty$ stable by Theorem~\ref{thm:IDP}. For a nonlinear system (such as the Euler equations, which will be discussed and analyzed in Section~\ref{sec:convergence}), the intermediate states $\bar u_{ij}$ coincide with spatially averaged exact solutions of one-dimensional Riemann problems \cite{guermond2016,harten1983}. If the initial value problem under consideration has a local invariant domain $\mathcal G_T$ that consists of bounded states, then the $L^\infty$ stability of \eqref{uispp} is guaranteed for initial data belonging to $\mathcal G_T$ and time steps satisfying \eqref{uispp-cfl}. Dividing \eqref{uispp} by $\Delta t$ and passing to the limit $\Delta t\to 0$, one can then show  the $L^\infty$ stability of \eqref{mcl}. In particular, for the Euler equations of gas dynamics we will show that the desired $L^\infty$ stability follows from a natural assumption that numerical solutions stay in a non-degenerate region, see Section~\ref{sec:convergence} for further details.

\begin{theorem}[On semi-discrete entropy stability of flux-corrected FEM]\label{thm:EntropyTadmor}
  Let $\{\eta(u),\bm{q}(u)\}$ be
an entropy pair of the hyperbolic problem \eqref{ibvp}. Suppose that condition \eqref{estab}
holds for all indices $j\in\Ni\backslash\{i\}$. Then
a solution to \eqref{mcl} satisfies the entropy inequality
\beq\label{semi-ineq}
m_i\td{\eta(u_i)}{t}\le\sum_{j\in\Nis}[g_{ij}-
   (\bm{q}_j-\bm{q}_i)\cdot\bm{c}_{ij}],
\eeq
where
\beq\label{semi-ineq-fluxes}
 g_{ij}=\frac{(v_i+v_j)^\top}2[d_{ij}(u_j-u_i)+f_{ij}^*]
-\frac{(v_i-v_j)^\top}2(\bm{f}_j-\bm{f}_i)\cdot\bm{c}_{ij}.
\eeq
\end{theorem}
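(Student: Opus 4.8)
The plan is to take the inner product of the MCL system \eqref{mcl} with the entropy variables $v_i=\eta'(u_i)$ and then reorganise the right-hand side into the flux $g_{ij}$, the discrete entropy flux $-(\bm q_j-\bm q_i)\cdot\bm c_{ij}$, and a remainder that is bounded exactly by the entropy stability hypothesis \eqref{estab}. Write $F_{ij}:=d_{ij}(u_j-u_i)-(\bm f_j-\bm f_i)\cdot\bm c_{ij}+f_{ij}^*$ for the bracket in \eqref{mcl}, so that $m_i\,\td{u_i}{t}=\sum_{j\in\Nis}F_{ij}$. Since $\{\eta,\bm q\}$ is an entropy pair, $\eta$ is differentiable along the solution and the chain rule gives $m_i\,\td{\eta(u_i)}{t}=v_i^\top\!\sum_{j\in\Nis}F_{ij}=\sum_{j\in\Nis}v_i^\top F_{ij}$. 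The claim \eqref{semi-ineq} then reduces to the per-edge estimate $v_i^\top F_{ij}\le g_{ij}-(\bm q_j-\bm q_i)\cdot\bm c_{ij}$, which I would prove for every $j\in\Nis$ and sum.

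For the per-edge bound I would insert the splitting $v_i=\tfrac12(v_i+v_j)+\tfrac12(v_i-v_j)$ into $v_i^\top F_{ij}$. Grouping the diffusion/antidiffusion part $d_{ij}(u_j-u_i)+f_{ij}^*$ with the flux-difference term $(\bm f_j-\bm f_i)\cdot\bm c_{ij}$, the symmetric contribution $\tfrac12(v_i+v_j)^\top[d_{ij}(u_j-u_i)+f_{ij}^*]$ together with $-\tfrac12(v_i-v_j)^\top(\bm f_j-\bm f_i)\cdot\bm c_{ij}$ is precisely $g_{ij}$ as defined in \eqref{semi-ineq-fluxes}, leaving $v_i^\top F_{ij}-g_{ij}=\tfrac12(v_i-v_j)^\top[d_{ij}(u_j-u_i)+f_{ij}^*]-\tfrac12(v_i+v_j)^\top(\bm f_j-\bm f_i)\cdot\bm c_{ij}$. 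I would then add and subtract $\tfrac12(v_i-v_j)^\top(\bm f_j+\bm f_i)\cdot\bm c_{ij}$ so that the left-hand side of \eqref{estab} appears verbatim and can be replaced by $(\boldsymbol\psi_j-\boldsymbol\psi_i)\cdot\bm c_{ij}$; the remaining flux terms are a pure identity in $v_i,v_j,\bm f_i,\bm f_j$, and after cancelling the cross terms they collapse to $(v_i^\top\bm f_i-v_j^\top\bm f_j)\cdot\bm c_{ij}$. Using the definition $\boldsymbol\psi(u)=v(u)^\top\bm f(u)-\bm q(u)$, this equals $-(\boldsymbol\psi_j-\boldsymbol\psi_i)\cdot\bm c_{ij}-(\bm q_j-\bm q_i)\cdot\bm c_{ij}$, so the two entropy-potential differences cancel and one is left with exactly $v_i^\top F_{ij}-g_{ij}\le-(\bm q_j-\bm q_i)\cdot\bm c_{ij}$, as required.

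Essentially nothing here is deep: the only inequality invoked is \eqref{estab}, and everything else consists of exact identities, so I do not expect a genuine obstacle. The one place that needs care is purely organisational — keeping straight which flux combination ($\bm f_j-\bm f_i$ in $F_{ij}$ and $g_{ij}$, but $\bm f_j+\bm f_i$ in \eqref{estab}) and which sign occur where, so that after the substitution from \eqref{estab} the leftover genuinely collapses onto the entropy-potential identity rather than leaving a spurious term. I would manage this by treating $g_{ij}$ as \emph{defined} by the symmetric part of the decomposition and the entropy-flux term as \emph{defined} by the identity for $\boldsymbol\psi$, which makes the two cancellations transparent. Note that no CFL-type or extra regularity assumptions enter: only differentiability of $\eta$ along the solution of \eqref{mcl} and the antisymmetry $f_{ij}^*=-f_{ji}^*$ (already built into the statement) are used, and convexity of $\eta$ is not even needed for this particular inequality.
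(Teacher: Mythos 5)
Your argument is correct: the edge-wise splitting $v_i=\tfrac12(v_i+v_j)+\tfrac12(v_i-v_j)$, the substitution of \eqref{estab}, and the collapse of the leftover flux terms via $\boldsymbol{\psi}(u)=v(u)^\top\bm f(u)-\bm q(u)$ all check out, and this is precisely the Tadmor-style computation carried out in the references the paper cites in lieu of a proof. You have essentially reproduced the omitted standard argument; no discrepancy with the paper's approach.
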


\begin{proof}
See \cite{bookWS,entropySD,entropyHO}.
\end{proof}

If the backward Euler method is used to discretize \eqref{mcl}
 in time, and the assumptions of Theorem \ref{thm:EntropyTadmor}
hold, then the fully discrete scheme is
entropy stable for
any time
step. Otherwise, fully discrete
entropy stability can again be enforced using a limiter-based fix \cite{bookWS,entropyFD}.

In the finite difference context, Coquel and LeFloch \cite{coquel1991} found for scalar hyperbolic conservation laws
that entropy stability implies uniform boundedness of discrete space
derivatives. This finding enabled them to prove convergence of
flux-corrected discretizations for conservation laws in several
space dimensions \cite{coquel1991,coquel1993}. It turns out that
the semi-discrete entropy stability of our MCL scheme also prevents
unbounded growth of weak derivatives. We derive an upper bound for
the rate of entropy dissipation in the following lemma, in which
$d_{ij}^{\min}$ denotes a solution of the scalar equation (cf. \cite{entropySD})
$$
\frac12(v_i-v_j)^\top[d_{ij}^{\min}(u_j-u_i)-
  (\bm{f}_j+\bm{f}_i)\cdot \bm{c}_{ij}]=
(\boldsymbol{\psi}_j-\boldsymbol{\psi}_i)\cdot\bm{c}_{ij}
$$
and $\|\cdot\|_2:\R^m\to\R^+_0$ stands for the Euclidean vector norm in $\R^m$.

\begin{lemma}\label{lemma1}
  Choose an entropy pair such that $\eta(u)$ is strictly
  convex and nonnegative.
Use a limiter that produces $f_{ij}^*=\alpha_{ij}d_{ij}(u_i-u_j)$ with
$d_{ij}>\max\{0,d_{ij}^{\min}\}$ and
$\alpha_{ij}\in [0,1]$ such that
  $$ (1-\alpha_{ij})d_{ij}>d_{ij}^{\min}
    \qquad\forall j\in\Nis.
    $$
    Then for any finite time $T>0$, a solution of the semi-discrete problem \eqref{mcl} satisfies
    \begin{equation}
    \label{BV}
    \int_0^T\left(
\sum_{i=1}^{N_h}\sum_{j\in\Nis}|\bm{c}_{ij}|\|u_j-u_i\|_2^2\right)\dt\le C_T,
\end{equation}
where $C_T>0$ is a constant depending on~$T$.

\end{lemma}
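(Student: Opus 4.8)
The plan is to start from the semi-discrete entropy inequality \eqref{semi-ineq}--\eqref{semi-ineq-fluxes} of Theorem~\ref{thm:EntropyTadmor}, sum it over all nodes $i=1,\ldots,N_h$, and integrate in time over $[0,T]$. Because $\bm{q}_j-\bm{q}_i$ is antisymmetric and $\bm{c}_{ij}=-\bm{c}_{ji}$, the flux terms $(\bm{q}_j-\bm{q}_i)\cdot\bm{c}_{ij}$ cancel pairwise under the double sum, and likewise the genuinely conservative part of $g_{ij}$ (the piece symmetric in $i\leftrightarrow j$) contributes nothing; what survives is a global entropy balance of the form $\sum_i m_i\eta(u_i)(T)-\sum_i m_i\eta(u_i)(0)\le -\int_0^T \mathcal D_h(t)\,\dt$, where $\mathcal D_h$ collects the nonnegative local dissipation. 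Since $\eta$ is nonnegative, the left-hand side is bounded below by $-\sum_i m_i\eta(u_i^0)=-\int_\Omega \eta(u_h^0)\,\dx$, which is finite and independent of $h$ for admissible initial data; hence $\int_0^T \mathcal D_h\,\dt\le C_T$.

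The second step is to show that $\mathcal D_h$ controls the quantity on the left of \eqref{BV}. Using the hypothesis on the limiter, $f_{ij}^*=\alpha_{ij}d_{ij}(u_i-u_j)$, one rewrites the bracket in \eqref{estab} as $(d_{ij}-\alpha_{ij}d_{ij})(u_j-u_i)-(\bm{f}_j+\bm{f}_i)\cdot\bm{c}_{ij}$, i.e.\ with effective viscosity $\tilde d_{ij}:=(1-\alpha_{ij})d_{ij}$. Comparing with the defining equation for $d_{ij}^{\min}$, the entropy production rate associated with edge $ij$ is $\tfrac12(v_i-v_j)^\top(\tilde d_{ij}-d_{ij}^{\min})(u_j-u_i)$, up to terms that cancel in the global sum. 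Strict convexity of $\eta$ gives $(v_i-v_j)^\top(u_i-u_j)=(\eta'(u_i)-\eta'(u_j))^\top(u_i-u_j)\ge \mu\|u_i-u_j\|_2^2$ for states in the (bounded, non-degenerate) region under consideration, with $\mu>0$; combined with the strict inequality $\tilde d_{ij}>d_{ij}^{\min}$ this yields a lower bound $\mathcal D_h\gtrsim \sum_i\sum_{j}(\tilde d_{ij}-d_{ij}^{\min})\|u_i-u_j\|_2^2$. It then remains to bound $|\bm{c}_{ij}|$ by a constant multiple of $\tilde d_{ij}-d_{ij}^{\min}$; this follows from the structure of $d_{ij}\ge\max(\lambda_{ij}|\bm{c}_{ij}|,\lambda_{ji}|\bm{c}_{ji}|)$ together with the assumed strict separation of $(1-\alpha_{ij})d_{ij}$ from $d_{ij}^{\min}$ and a bound on $d_{ij}^{\min}$ in terms of $|\bm{c}_{ij}|$ (wave speeds and fluxes being bounded on the invariant region). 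Substituting into the global dissipation estimate produces \eqref{BV}.

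The main obstacle will be making the per-edge bookkeeping rigorous: one must verify that after summing over $i$ the antisymmetric/conservative contributions really cancel and that the residual is exactly (a positive multiple of) the dissipation terms, without stray boundary or mass-matrix artifacts — here the partition-of-unity and antisymmetry properties \eqref{prop:cijdij}, \eqref{prop:cij} and the antisymmetry $f_{ij}^*=-f_{ji}^*$, $g_{ij}$ versus $g_{ji}$ must be used carefully. A secondary technical point is uniformity of the constants $\mu$ (strict convexity modulus) and of the ratio $|\bm{c}_{ij}|/(\tilde d_{ij}-d_{ij}^{\min})$: these rely on the a~priori assumption (to be justified for the Euler system in Section~\ref{sec:convergence}) that the numerical solution stays in a fixed compact non-degenerate subset of the invariant domain $\mathcal G$, on which $\eta$ is uniformly strictly convex and $\bm f$, the wave speeds and $d_{ij}^{\min}$ are bounded. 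Granting that, the estimate \eqref{BV} follows with $C_T$ depending only on $T$, on $\int_\Omega\eta(u_h^0)\,\dx$, and on these uniform bounds.
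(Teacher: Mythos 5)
Your proposal is correct and follows essentially the same route as the paper's proof: sum the nodal entropy balance over $i$, integrate in time, cancel the antisymmetric/conservative contributions, identify the surviving dissipation as $\tfrac12\,d_{ij}^{\rm add}(v_i-v_j)^\top(u_j-u_i)$ with $d_{ij}^{\rm add}=(1-\alpha_{ij})d_{ij}-d_{ij}^{\min}>0$, bound it below via strict convexity of $\eta$, and use the proportionality of $d_{ij}^{\rm add}$ to $|\bm{c}_{ij}|$. The uniformity issues you flag (the convexity modulus $\mu$ and the ratio $|\bm{c}_{ij}|/d_{ij}^{\rm add}$ on a compact non-degenerate set) are indeed present but left implicit in the paper as well.
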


\begin{proof}
  In view of \eqref{prop:cij},
  the flux-corrected evolution equation for
  $u_i$ can be written as
  \beq \label{mcl-lemma1}
m_{i}\td{u_i}{t}=
\sum_{j\in\mathcal N_i\backslash\{i\}}[(1-\alpha_{ij})d_{ij}(u_j-u_i)-
  (\bm{f}_j+\bm{f}_i)\cdot \bm{c}_{ij}].
\eeq

Multiplying it by the entropy variable $v_i=\frac12(v_i-v_j)+\frac12(v_i+v_j)$, we find that
\beq\label{eta:ode}
m_{i}\td{\eta(u_i)}{t}=
m_{i}v_i^\top\td{u_i}{t}=
\sum_{j\in\mathcal N_i\backslash\{i\}}[q_{ij}^-+q_{ij}^+],
\eeq
where
$$
q_{ij}^\pm=\frac12(v_i\pm v_j)^\top[(1-\alpha_{ij})d_{ij}(u_j-u_i)-
  (\bm{f}_j+\bm{f}_i)\cdot \bm{c}_{ij}].
$$
Since $\bm{c}_{ij}=-\bm{c}_{ji}$ by \eqref{prop:cijdij}, the so-defined
increments $q_{ij}^\pm$ satisfy
$$
q_{ij}^-=q_{ji}^-,\qquad q_{ij}^+=-q_{ji}^+\qquad \forall j\in\mathcal N_i\backslash\{i\}.
$$

The total entropy of $u_h(\cdot,t)$ at time $t\in[0,T]$ is given by
$$
\eta_\Omega(t)=\sum_{i=1}^{N_h}m_i\eta(u_i(t)).
$$
Summing equations \eqref{eta:ode} over $i=1,\ldots,N$
and integrating over $[0,T]$, we obtain
$$\eta_\Omega(T)=\eta_\Omega(0)+\int_0^T
\left(\sum_{i=1}^{N_h}
\sum_{j\in\mathcal N_i\backslash\{i\}}q_{ij}^-\right)\dt.
$$

Let $d_{ij}^{\rm add}=(1-\alpha_{ij})d_{ij}-d_{ij}^{\min}$.
Note that $d_{ij}^{\rm add}> 0$ by assumption. By \eqref{prop:cij}, we have
$$
\sum_{j\in\mathcal N_i\backslash\{i\}}
(\boldsymbol{\psi}_j-\boldsymbol{\psi}_i)\cdot\bm{c}_{ij}
=\sum_{j\in\mathcal N_i\backslash\{i\}}
(\boldsymbol{\psi}_j+\boldsymbol{\psi}_i)\cdot\bm{c}_{ij}=0.
$$
Using this result and the representation
$$
q_{ij}^-=\frac{d_{ij}^{\rm add}}2(v_i-v_j)^\top (u_j-u_i)+
(\boldsymbol{\psi}_j-\boldsymbol{\psi}_i)\cdot\bm{c}_{ij},
$$
we arrive at the identity
$$
\eta_\Omega(T)=\eta_\Omega(0)+\int_0^T\left(
\sum_{i=1}^{N_h}
\sum_{j\in\mathcal N_i\backslash\{i\}}\frac{d_{ij}^{\rm add}}2(v_i-v_j)^\top (u_j-u_i)
\right)\dt.
$$
The Taylor expansion of $v(u)=\eta'(u)$ about $u_i$ reveals that
$v_j-v_i=\eta''(\hat u_{ij})(u_j-u_i)$ for
a convex combination $\hat u_{ij}$ of the states $u_i$ and $u_j$.
The symmetric entropy Hessian
$\eta''(\hat u_{ij})$ is positive definite by virtue of the assumption
that $\eta(u)$ is strictly convex. It follows that
$$
(v_j-v_i)^\top (u_j-u_i)\ge \mu_{ij}\|u_j-u_i\|_2^2,
$$
where $\mu_{ij}>0$ is the minimum eigenvalue of $\eta''(\hat u_{ij})$.
Thus
$$0\le \int_0^T\left(
\sum_{i=1}^{N_h}
\sum_{j\in\mathcal N_i\backslash\{i\}}\frac{d_{ij}^{\rm add}}2
\mu_{ij}\|u_j-u_i\|_2^2
\right)\dt\le \eta_\Omega(0)-\eta_\Omega(T)\le \eta_\Omega(0).
$$
By definition of $d_{ij}$ and  $d_{ij}^{\min}$, the nonnegative entropy viscosity coefficients
$d_{ij}^{\rm add}$ are proportional to $|\bm{c}_{ij}|$.
Hence, the last estimate proves the validity of the claim.
\end{proof}

\begin{remark}
  The above proof follows the derivation of ``weak BV'' estimates
  for entropy-stable finite volume schemes in Fjordholm et al.~\cite{fjordholm, fjordholm_3} and Feireisl, Luk\'a\v{c}ov\'a-Medvid'ov\'a
  et al.~\cite{feireisl2020b}.
\end{remark}

The following result explains the practical significance of Lemma
\ref{lemma1}.

\begin{lemma}\label{lemma2}
 Under the assumption that the sequence of meshes $\{\mathcal T_h\}_{h \searrow 0}$ is shape regular, there exist constants $C_1,C_2>0$ independent of $h$ such that
 $$C_1h|v_h|_{H^1(\Omega)}^2\le d_h(v_h,v_h)
 \le C_2h|v_h|_{H^1(\Omega)}^2\qquad\forall v_h\in V_h,
 $$
 where  $d_h(\cdot,\cdot)$ is defined by
   $$
  d_h(v_h,w_h)=\sum_{i=1}^{N_h}\sum_{j\in\Nis}|\bm{c}_{ij}|
  (v_j-v_i)^\top
  (w_j-w_i)\qquad \forall v_h,w_h\in V_h.
  $$

  \end{lemma}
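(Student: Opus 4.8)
The plan is to prove the two-sided bound by expressing $d_h(v_h,v_h)$ as a sum of local contributions over mesh elements, and then comparing each local contribution with the corresponding element-wise $H^1$-seminorm. The key observation is that for a continuous piecewise linear function $v_h=\sum_j v_j\phi_j$, the gradient $\nabla v_h$ is constant on each element $K$, and the jumps $v_j-v_i$ across an edge of $K$ are directly related to directional derivatives of $v_h$ on $K$ times the edge length. Concretely, $|v_h|_{H^1(\Omega)}^2=\sum_{K\in\mathcal T_h}|K|\,|\nabla v_h|_K|^2$, and on a nondegenerate simplex $K$ with vertices indexed by a local set, one has $|\nabla v_h|_K|^2\simeq h_K^{-2}\sum_{\{i,j\}\subset K}(v_j-v_i)^2$ with equivalence constants depending only on the shape-regularity parameter. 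Summing over $K$, and using that each edge $\{i,j\}$ is shared by a uniformly bounded number of elements, gives $|v_h|_{H^1(\Omega)}^2\simeq\sum_{i}\sum_{j\in\Nis}h^{d-2}(v_j-v_i)^2$ up to $h$-uniform constants (grouping mesh-size factors via shape regularity so that $|K|\simeq h_K^d$ and $h_K\simeq h$ only in the quasi-uniform case; in the general shape-regular case one keeps local $h_K$).

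Next I would estimate $|\bm c_{ij}|$. By definition $\bm c_{ij}=\int_\Omega\phi_i\nabla\phi_j\,\dx$, and on a simplex $K$ the gradient $\nabla\phi_j|_K$ is a constant vector of magnitude $\simeq h_K^{-1}$ (with shape-regular constants), while $\int_K\phi_i\,\dx\simeq|K|\simeq h_K^d$. Hence $|\bm c_{ij}|\simeq h_K^{d-1}$, i.e. $|\bm c_{ij}|\simeq h^{d-1}$ with $h$-uniform constants under shape regularity. Substituting this into the definition of $d_h(v_h,v_h)=\sum_i\sum_{j\in\Nis}|\bm c_{ij}|\,|v_j-v_i|^2$ and comparing with the expression for $|v_h|_{H^1(\Omega)}^2$ derived above shows that $d_h(v_h,v_h)\simeq h\cdot h^{d-2}\sum_{i,j}(v_j-v_i)^2\simeq h\,|v_h|_{H^1(\Omega)}^2$, which is the claimed two-sided estimate. (If $v_h$ is vector-valued one applies the scalar argument componentwise and sums.)

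The main technical obstacle is the lower bound $|\nabla v_h|_K|^2\gtrsim h_K^{-2}\sum_{\{i,j\}\subset K}(v_j-v_i)^2$, i.e. showing that the edge differences cannot all be small while $\nabla v_h|_K$ is large. This is a finite-dimensional equivalence of norms on the reference element: the map from the vertex values (modulo constants) to $\nabla v_h$ on the reference simplex is a linear isomorphism, so the two quantities are equivalent there, and the equivalence transfers to $K$ with constants controlled by the shape-regularity of the affine map $K\mapsto\hat K$. One must also be careful that the edge sums in $d_h$ run over the graph neighbours $\Nis$, which is exactly the set of edges of the mesh, so no spurious pairs appear; and that the overlap count (number of elements meeting a fixed edge, and number of neighbours of a fixed node) is bounded uniformly in $h$ — again a direct consequence of shape regularity. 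Assembling these elementwise equivalences with uniformly bounded overlap yields the global two-sided bound with $h$-independent constants $C_1,C_2$.
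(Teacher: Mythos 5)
Your proposal is correct and follows essentially the same route as the paper: both reduce the claim to a local, per-element equivalence between $h_K^{d-2}\sum_{\{i,j\}\subset K}|v_j-v_i|^2$ and the elementwise $H^1$-seminorm (the paper cites Guermond--Popov, Lemma~2.2, for this reference-element norm equivalence, which you instead prove directly), combined with the scaling $|\bm{c}_{ij}|=\mathcal O(h^{d-1})$ and bounded overlap. One cosmetic remark: in your ``main technical obstacle'' the displayed inequality $|\nabla v_h|_K|^2\gtrsim h_K^{-2}\sum(v_j-v_i)^2$ is the trivial direction (it follows from $v_j-v_i=\nabla v_h|_K\cdot(\mathbf{x}_j-\mathbf{x}_i)$), whereas your verbal description and the reference-element isomorphism argument correctly address the nontrivial reverse direction needed for the lower bound $C_1h|v_h|_{H^1(\Omega)}^2\le d_h(v_h,v_h)$, so the argument itself is sound.
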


\begin{proof}
  Let $\mathcal N_K$ contain the indices of nodes belonging to
 an element $K\in\mathcal T_h$. Denote the
local mesh size by $h_K$.
 Construct $b_h(v_h,v_h)=
\sum_{K\in\mathcal T_h}b_K(v_h,v_h)$ using the local bilinear forms
  $$b_K(v_h,w_h)
=h_K^{d-1}\sum_{i\in\mathcal N_K}\sum_{j\in\mathcal N_K\backslash\{i\}}(v_j
  -v_i)^\top(w_j-w_i).
$$

  A straightforward generalization of the result obtained by
  Guermond and Popov \cite[Lemma 2.2]{guermond2016b} for
  scalar quantities ($m=1$) to the case of $m\ge 1$ variables shows that
$$
c_1|v_h|_{H^1(\Omega)}^2
\le \frac{h_K^{1-d}\|J_K^{-1}\|^{2}}{|\det J_K^{-1}|}
 b_K(v_h,v_h)\le c_2|v_h|_{H^1(\Omega)}^2,
    $$
    where $J_K$ is
    the Jacobian of the affine mapping from the reference element to $K$. Note that
    $|\det J_K^{-1}|=\mathcal
    O(h_K^{-d})$ and $\|J_K^{-1}\|=\mathcal O(h_K^{-1})$ under the assumption
    of shape regularity.
    The seminorm induced by $d_h(\cdot,\cdot)$ is equivalent to
    that induced by $b_h(\cdot,\cdot)$
because $|\bm{c}_{ij}|=\mathcal O(h^{d-1})$.
    Hence, the claim is true.
  \end{proof}

  \section{Semi-discrete Lax--Wendroff theorem}
\label{sec:consistency}

A finite element version of the Lax--Wendroff theorem for fully discrete schemes was proven in \cite{entropyFD}. In this section, we use similar arguments to prove semi-discrete consistency of our flux-corrected finite element method \eqref{mcl}. For simplicity, we restrict our analysis to the scalar ($m=1$) case. However, the following theorem admits a straightforward extension to systems.
\begin{theorem}[On semi-discrete Lax--Wendroff consistency of flux-corrected FEM]\label{TH:LW}
  Suppose that $u_0 \in H^2(\Omega)$ and that the flux function $\bm f(u)\in C^0(\R)^d$ is Lipschitz with constant $\lambda > 0$.

  Define $u_{h_k}(\cdot, 0) = I_{h_k} u_0$ using the interpolation operator $I_{h_k}$ and compute numerical solutions $u_{h_k}$ using the semi-discrete scheme \eqref{mcl}.
 Assume that there exist a finite time $T > 0$, a~function $u \in L^2(\Omega \times (0,T))$,  and a constant $C_{\mathcal G}>0$ independent of $k\in\N$ such that
 \begin{gather}\label{EQ:u_conv}
  \| u_{h_k} - u \|_{L^2(\Omega \times (0,T))} \to 0 \quad \text{as}\ k \to \infty,\qquad
  \| u_{h_k} \|_{L^\infty(\Omega \times (0,T))}  \le C_{\mathcal G}.
 \end{gather}
 If the assumptions of Lemmas \ref{lemma1} and \ref{lemma2}
 hold, then $u$ is a weak solution
 of \eqref{ibvp} in the sense that
 \begin{equation}\label{weakcont}
  \int_0^T \int_\Omega \left[\tfrac{\partial \phi}{\partial t} u + \nabla \phi\cdot \bm f(u)\right] \dx \dt + \int_\Omega \phi(\bm x, 0) u(\bm x, 0) \dx =0
 \end{equation}
 for all test functions
 $\phi \in C^2_c(\bar\Omega \times [0,T])$ (compact support in time, periodic in space).

Moreover, the weak entropy inequality
 \begin{equation}\label{EQ:weak_ent}
  \int_0^T \int_\Omega \left[\tfrac{\partial\phi }{\partial t} \eta(u) + \nabla \phi\cdot \bm{q}(u)\right] \dx \dt + \int_\Omega \phi(\mathbf x, 0) \eta(u(\mathbf x, 0)) \dx\ge 0
 \end{equation}
 holds for all nonnegative  test functions $\varphi \in C^2_c(\bar\Omega \times [0,T])$ and any entropy pair $\{\eta(u),\bm{q}(u)\}$ that meets the assumptions of  Lemma \ref{lemma1}.

\end{theorem}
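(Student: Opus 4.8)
The plan is to prove a semi-discrete Lax--Wendroff consistency result by testing the scheme \eqref{galerkin-dae} (equivalently \eqref{mcl}) against nodal values of a smooth test function and passing to the limit $k\to\infty$. First I would fix $\phi\in C^2_c(\bar\Omega\times[0,T])$ and set $\phi_i(t)=\phi(\bm x_i,t)$, then multiply the $i$th equation of \eqref{galerkin-dae} by $\phi_i(t)$ and sum over $i=1,\ldots,N_{h_k}$. Integrating in time over $(0,T)$ and integrating by parts in $t$ (using $\phi(\cdot,T)=0$) moves the time derivative onto $\phi$ and produces the initial-data term $-\sum_i\sum_j m_{ij}\phi_i(0)u_j(0)$, which converges to $\int_\Omega\phi(\bm x,0)u(\bm x,0)\dx$ because $u_{h_k}(\cdot,0)=I_{h_k}u_0\to u_0$ in $L^2$ and the lumped/consistent mass matrices differ by an $\mathcal O(h_k)$ operator on $H^1$-bounded data. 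The flux term $\sum_i\sum_j \bm f_j\cdot\bm c_{ij}\,\phi_i$ is rewritten using the antisymmetry \eqref{prop:cijdij} and the zero-row-sum property \eqref{prop:cij} as a telescoped sum resembling a discrete $\int_\Omega \bm f_h\cdot\nabla\phi_h\dx$; the standard quadrature/interpolation estimates for piecewise-linear FEM then identify its limit as $-\int_0^T\!\int_\Omega\nabla\phi\cdot\bm f(u)\dx\dt$, using that $\bm f$ is Lipschitz, $u_{h_k}\to u$ in $L^2$, and $\|u_{h_k}\|_{L^\infty}\le C_{\mathcal G}$ to control $\bm f(u_{h_k})\to\bm f(u)$ in $L^2$.

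The nontrivial point is that the scheme \eqref{mcl} is not the plain Galerkin scheme: it contains the artificial-viscosity terms $d_{ij}(u_j-u_i)$, the consistency-correction of the mass lumping hidden in $f_{ij}$, and the limited antidiffusive fluxes $f_{ij}^*$. One must show all of these vanish in the limit. For the diffusion terms, after testing with $\phi_i$ and using antisymmetry of $\bm c_{ij}$ (hence of $d_{ij}$ and of $f_{ij}^*$), each such contribution takes the form $\tfrac12\sum_i\sum_{j}d_{ij}(u_j-u_i)(\phi_i-\phi_j)$; bounding $|\phi_i-\phi_j|\le \|\nabla\phi\|_\infty\,\mathrm{diam}(K)$ and using $d_{ij}=\mathcal O(|\bm c_{ij}|)=\mathcal O(h_k^{d-1})$ reduces this to $C\,h_k\!\int_0^T d_{h_k}(u_{h_k},u_{h_k})^{1/2}\big(\sum_{i,j}|\bm c_{ij}|\big)^{1/2}\dt$ by Cauchy--Schwarz. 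The weak BV estimate of Lemma \ref{lemma1} bounds $\int_0^T\sum_{i,j}|\bm c_{ij}|\,|u_j-u_i|^2\dt$ uniformly, while $\sum_{i,j}|\bm c_{ij}|=\mathcal O(h_k^{-1})$ by shape regularity (Lemma \ref{lemma2}), so the diffusive contribution is $\mathcal O(h_k^{1/2})\to 0$. The limited fluxes $f_{ij}^*$ obey $|f_{ij}^*|\le d_{ij}|u_j-u_i|$ (since $f_{ij}^*=\alpha_{ij}d_{ij}(u_i-u_j)$, $\alpha_{ij}\in[0,1]$), so the same bound applies verbatim. The mass-lumping correction $\sum_i\sum_j m_{ij}(\dot u_i-\dot u_j)\phi_i$ is handled by noting $m_{ij}=\mathcal O(h_k^d)$, rewriting it antisymmetrically as $\tfrac12\sum_{i,j}m_{ij}(\dot u_j-\dot u_i)(\phi_i-\phi_j)$, and using $\eqref{galerkin-dae}$ together with $\|\dot u_h\|$-type estimates, or more directly by observing that $\int_\Omega(\text{consistent}-\text{lumped})(u_h)\phi_h\dx=\mathcal O(h_k^2)|u_h|_{H^1}|\phi_h|_{H^1}$ and invoking the weak BV bound again to keep $h_k|u_{h_k}|_{H^1(\Omega)}$ (equivalently $d_{h_k}(u_{h_k},u_{h_k})^{1/2}$) integrable in time. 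Collecting all four limits yields \eqref{weakcont}.

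The entropy inequality \eqref{EQ:weak_ent} follows the same recipe but starting from the semi-discrete entropy inequality \eqref{semi-ineq} of Theorem \ref{thm:EntropyTadmor} instead of the equation. I would multiply \eqref{semi-ineq} by $\phi_i(t)\ge 0$, sum over $i$, integrate over $(0,T)$, integrate by parts in time (again using $\phi(\cdot,T)=0$ and the sign of $\phi$ to keep the inequality direction), and obtain $\int_0^T\!\int_\Omega[\partial_t\phi\,\eta(u_{h_k})+\nabla\phi\cdot\bm q(u_{h_k})]\dx\dt + \int_\Omega\phi(\bm x,0)\eta(u_{h_k}(\bm x,0))\dx \ge -R_k$, where $R_k$ collects the entropy-flux truncation terms $g_{ij}$ minus their telescoped form. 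Using $\eta,\bm q\in C^0$ and $\|u_{h_k}\|_{L^\infty}\le C_{\mathcal G}$ gives $\eta(u_{h_k})\to\eta(u)$ and $\bm q(u_{h_k})\to\bm q(u)$ in $L^2$, handling the main terms. The remainder $R_k$ consists, after antisymmetrization, of quadratic-in-$(u_j-u_i)$ expressions weighted by $|\bm c_{ij}|(\phi_i-\phi_j)$ plus the $\mathcal O(h_k^2)$ mass-lumping piece — exactly the structure already shown to be $\mathcal O(h_k^{1/2})$ via Lemma \ref{lemma1} and Lemma \ref{lemma2} (here one also uses that $v(u)$, $\boldsymbol\psi(u)$ are Lipschitz on the compact range, so $|g_{ij}-(\text{consistent entropy flux})|\lesssim d_{ij}|u_j-u_i|^2 + |\bm c_{ij}|\,|u_j-u_i|^2$). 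Passing to the limit gives \eqref{EQ:weak_ent}.

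\textbf{Main obstacle.} The delicate step is controlling the mass-lumping correction term and the artificial-diffusion/antidiffusion terms uniformly in time: a pointwise-in-$t$ BV bound is not available, only the space-time integral bound of Lemma \ref{lemma1}. Hence every residual must be arranged, via the antisymmetry of $\bm c_{ij}$ and $m_{ij}$ and a single Cauchy--Schwarz in the node-pair index, into the exact form $h_k^{1/2}\big(\int_0^T d_{h_k}(u_{h_k},u_{h_k})\dt\big)^{1/2}$ times an $h_k$-independent constant; any cruder estimate (e.g.\ using $\|u_{h_k}\|_\infty$ linearly instead of the square of the jump) fails to close. Establishing that the limit function $u(\bm x,0)$ in \eqref{weakcont} really equals $u_0$ (and not merely some trace) also requires a short argument: it uses $I_{h_k}u_0\to u_0$ in $L^2(\Omega)$ together with the time-continuity of $t\mapsto\langle u_{h_k}(t),\phi_h\rangle$ furnished by the ODE system, so that the initial-value term converges to $\int_\Omega\phi(\bm x,0)u_0(\bm x)\dx$.
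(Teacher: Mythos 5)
Your proposal is correct and follows essentially the same route as the paper: test against nodal values of the interpolated test function, integrate by parts in time, split the residual into mass-lumping, flux-interpolation and stabilization/limiting parts, antisymmetrize each, and close with Cauchy--Schwarz plus the weak BV bound of Lemma \ref{lemma1} and the seminorm equivalence of Lemma \ref{lemma2} to obtain $\mathcal O(h^{1/2})$ decay, with the entropy inequality treated identically starting from \eqref{semi-ineq}. The only spot you gloss over is the group finite element error $\bm f(u_h)-I_h\bm f(u_h)$, which for a merely Lipschitz flux is not a ``standard'' interpolation estimate but is reduced in the paper, via a partition-of-unity argument, to exactly the $h^{1/2}$-times-weak-BV form you describe, so your framework does cover it.
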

\begin{proof}
  The interpolation operator $I_h:C(\bar\Omega)\to V_h$ approximates $v\in C(\bar\Omega)$ by
  a piecewise-linear function $I_hv=\sum_{i=1}^{N_h}v_i\phi_i\in V_h$
  such that $v_i=v(\mathbf{x}_i)$ and (see, e.g., \cite{larsson2003})
  \beq\label{interpolation_error}
  \|I_hv-v\|_{L^2(K)}\le Ch^2\|v\|_{H^2(K)},\qquad
   |I_hv-v|_{H^1(K)}\le Ch\|v\|_{H^2(K)}\qquad \forall K\in\mathcal T_h.
   \eeq
   Notice that
   $$\int_\Omega I_h(v_hu_h)\dx=
   \sum_{i=1}^{N_h}m_iu_iv_i,\quad
   \int_\Omega v_h\nabla\cdot (I_h\bm{f}(u_h))\dx=\sum_{i=1}^{N_h}v_i
   \sum_{j\in\Ni}\bm{f}_j\cdot\bm{c}_{ij}\qquad\forall u_h,v_h\in V_h.
$$
  Let $\phi \in C^2_c(\bar\Omega \times [0,T])$ and
    $\phi_h(\cdot,t)=I_h\phi(\cdot,t)$ for $t\in[0,T]$. To show consistency with
\eqref{weakcont},
    we multiply \eqref{mcl} by $\phi_i(t)=\phi_h(\mathbf{x}_i,t)$,
    sum over $i=1,\ldots,N_h$, and integrate in time over $[0,T]$. Next, we
    perform integration by parts and multiply the resulting equation by
    $-1$. This gives
    \beq\label{weakdiscr}
    \int_0^T \int_\Omega \left[\pd{\phi_h}{t} u_h + \nabla \phi_h\cdot \bm f(u_h)\right] \dx \dt + \int_\Omega \phi_h(\bm x, 0) u_h(\bm x, 0) \dx =R_h(u_h,\phi_h),
 \eeq
 where
   \begin{equation}\label{eq_consistency_error}
   \begin{aligned}
     R_h(u_h,\phi_h)
     &=\underbrace{\int_0^T\int_\Omega\left[\pd{\phi_h}{t} u_h -
         I_h\left(\pd{\phi_h}{t}u_h\right)\right]\dx\dt}_{R_h^1(u_h,\phi_h)}
     +\underbrace{\int_0^T \int_\Omega     \nabla \phi_h\cdot
     [\bm f(u_h)-I_h\bm f(u_h)] \dx \dt }_{R_h^2(u_h,\phi_h)}\\
     &+\underbrace{\int_0^T\left[\sum_{i=1}^{N_h}\phi_i(t)\sum_{j\in\Nis}
         (1-\alpha_{ij}(u_h))d_{ij}(u_i(t)-u_j(t))\right]\dt}_{R_h^3(u_h,\phi_h)}
         \end{aligned}
   \end{equation}
   is the consistency error caused by the use of inexact quadrature and algebraic
   stabilization\footnote{$R_h(u_h,\phi_h)=0$ for the standard Galerkin discretization.}.

   We need to show that
   \begin{subequations}
   \begin{align}
    \lim_{h\to 0}\int_0^T \int_\Omega\pd{\phi_h}{t} u_h\dx\dt&=\int_0^T
    \int_\Omega \tfrac{\partial \phi}{\partial t} u \dx\dt,\label{conda}\\
    \lim_{h\to 0} \int_0^T
    \int_\Omega \nabla \phi_h\cdot \bm f(u_h)\dx&=
    \int_\Omega \nabla \phi\cdot \bm f(u) \dx\dt,\label{condb}\\
    \lim_{h\to 0} \int_0^T
    \int_\Omega \phi_h(\bm x, 0) I_hu_0(\bm x) \dx
    &= \int_\Omega \phi(\bm x, 0) u_0(\bm x)\dx\dt,\label{condc}\\
    \lim_{h\to 0}R_h(u_h,\phi_h)&=0.\label{condd}
   \end{align}
   \end{subequations}
   Let us begin with  \eqref{condc}. We have
   $$
   \int_\Omega \phi_h(\bm x, 0) I_hu_0(\bm x) \dx=
    \int_\Omega \phi_h(\bm x, 0)u_0 \dx+
   \int_\Omega \phi_h(\bm x, 0)(I_hu_0(\bm x)-u_0(\bm x)) \dx,
   $$
   where $u_0\in H^2(\Omega)$ and, therefore, $u_0\in C(\bar\Omega)$ for $d=\{1,2,3\}$. The first error estimate in \eqref{interpolation_error} implies that
   $\|I_hu_0-u_0\|_{L^2(\Omega)}=\mathcal O(h^2)$ and
   $\|I_h\phi(\cdot,0)-\phi(\cdot,0)\|_{L^2(\Omega)}=\mathcal O(h^2)$,
   where $I_h\phi(\cdot,0)=\phi_h(\cdot,0)$ by definition of $\phi_h$.
   This proves the validity of \eqref{condc}. From
   $$
   \int_\Omega\pd{\phi_h}{t} u_h\dx=\int_\Omega\pd{\phi_h}{t} u\dx
   +\int_\Omega\pd{\phi_h}{t} (u_h-u)\dx
   $$
   and the assumption that   $\| u_{h} - u \|_{L^2(\Omega \times (0,T))} \to 0$ as $h\to 0$,
   we deduce the validity of  \eqref{conda}. Using the assumed Lipschitz continuity
   property $|\bm f(u_h)-\bm f(u)|\le \lambda|u_h-u|$ and the estimate
$$|\phi_h(\cdot,t)|_{H^1(\Omega)}\le
   |\phi(\cdot,t)|_{H^1(\Omega)}+|I_h\phi(\cdot,t)- \phi(\cdot,t)|_{H^1(\Omega)}
=   |\phi(\cdot,t)|_{H^1(\Omega)}+\mathcal O(h),$$
we find that the second integral on the right-hand side of the identity
\begin{equation}\label{eq_flux_estimate}
   \int_\Omega \nabla \phi_h\cdot \bm f(u_h)\dx
   =\int_\Omega \nabla \phi_h\cdot \bm f(u)\dx
   +\int_\Omega \nabla \phi_h\cdot(\bm f(u_h)-\bm f(u))\dx
   \end{equation}
 vanishes, while the first one converges to the right-hand side of \eqref{condb} as $h\to 0$.

 It remains to estimate the three components of the consistency error
 $R_h(u_h,\phi_h)$. The term

 $$R_h^1(u_h,\phi_h)=\int_0^T\int_\Omega\left[\pd{\phi_h}{t} u_h -
   I_h\left(\pd{\phi_h}{t}u_h\right)\right]\dx\dt
 $$
 is the error due to mass lumping. Let $\dot \phi_i=\pd{\phi_i}{t}$
 and $m_{ij}=\int_\Omega\phi_i\phi_j\dx$. Define the bilinear form
 $$
m_h(v_h,w_h)=\sum_{i=1}^{N_h}\sum_{j\in\Nis}m_{ij}(v_j-v_i)(w_j-w_i)\qquad\forall v_h,w_h\in V_h.
 $$
 Replacing $|\bm{c}_{ij}|=\mathcal O(h^{d-1})$ with
 $m_{ij}=\mathcal O(h^{d})$ in Lemma \ref{lemma2}, we find that the seminorm induced by
 $m_h(\cdot,\cdot)$ is equivalent to
 $h|\cdot|_{H^1(\Omega)}$. The use of the Cauchy--Schwarz inequality
 yields
$$|R_h^1(u_h,\phi_h)|=\int_0^T|m_h(u_h,\dot\phi_h)|\dt\le\int_0^T
 \sqrt{m_h(u_h,u_h)}\sqrt{m_h(\dot\phi_h,\dot\phi_h)}\dt.
 $$
Introducing the nonlinear form
  $$
d_h^*(u_h;v_h,w_h)=\sum_{i=1}^{N_h}\sum_{j\in\Nis}(1-\alpha_{ij}(u_h))d_{ij}(v_j-v_i)(w_j-w_i)
\qquad \forall u_h,v_h,w_h\in V_h,
$$
we obtain a similar estimate for
 $$|R_h^3(u_h,\phi_h)|=\int_0^T|d_h^*(u_h;u_h,\phi_h)|\dt\le\int_0^T
 \sqrt{d_h^*(u_h;u_h,u_h)}\sqrt{d_h^*(u_h;\phi_h,\phi_h)}\dt.
 $$
 The seminorm induced by $d_h^*(\cdot,\cdot)$ is equivalent to $\sqrt{h}|\cdot|_{H^1(\Omega)}$.
 The $H^1$ seminorms of $\dot\phi_h(\cdot,t)$ and  $\phi_h(\cdot,t)$ are uniformly bounded
 on $[0,T]$.
 By
 Lemmas \ref{lemma1} and \ref{lemma2}, there exists $\tilde C_T>0$ such that
 $$
 \int_0^T\sqrt{h}|u_h(\cdot,t)|_{H^1(\Omega)}\dt\le \tilde C_T,\qquad
|R_h^1(u_h,\phi_h)|=\mathcal O(h^{3/2}),\quad
 |R_h^3(u_h,\phi_h)|=\mathcal O(h^{1/2}).
 $$

 The error due to the \emph{group finite element} approximation $\bm{f}(u_h)\approx
 I_h\bm{f}(u_h)$ is represented by
$$
 R_h^2(u_h,\phi_h)=
 \int_0^T\int_\Omega\nabla \phi_h\cdot
     [\bm f(u_h)-I_h\bm f(u_h)] \dx \dt
= \int_0^T\sum_{K\in\mathcal T_h} \nabla \phi_h|_K\cdot\int_K
     [\bm f(u_h)-I_h\bm f(u_h)] \dx\dt.
     $$
     We use the Cauchy--Schwarz inequality to show that
     $$
\left|\int_\Omega\nabla \phi_h\cdot
          [\bm f(u_h)-I_h\bm f(u_h)] \dx\right|
          \le |\phi_h|_{H^1(\Omega)}\|\bm f(u_h)-I_h\bm f(u_h)\|_{L^2(\Omega)}.
          $$
          Using  the fact that  $\sum_{j\in\mathcal N_K}\phi_j\equiv 1$ and
the Lipschitz continuity of $\bm f(u_{h})$, we obtain the estimate
  \begin{align*}
    \|\bm f(u_h)-I_h\bm f(u_h)\|_{L^2(\Omega)}^2&=
    \sum_{K \in \mathcal T_{h}} \int_K |\bm f(u_{h}) - I_h\bm f(u_{h})|^2\dx \\&=
    \sum_{K \in \mathcal T_{h}} \int_K \Big|
    \bm f(u_{h})\sum_{j\in\mathcal N_K}  \phi_j
    - \sum_{j\in\mathcal N_K} \bm f(u_j) \phi_j\Big|^2\dx
    \\ &\le\sum_{K \in \mathcal T_{h}}  \sum_{j\in\mathcal N_K} \int_K
    |\bm f(u_{h}) -  \bm f(u_j)|^2
 \phi_j^2
    \dx
    \\ &\le\lambda \sum_{K \in \mathcal T_{h}}  \sum_{j\in\mathcal N_K} \int_K
    |u_{h}-u_j|^2 \dx\le\lambda \sum_{K \in \mathcal T_{h}}
    \sum_{j\in\mathcal N_K}|K|\max_{i\in\mathcal N_K}
    |u_i-u_j|^2\\
    &\le Ch\sum_{K \in \mathcal T_{h}}b_K(u_h,u_h)=Ch\,b_h(u_h,u_h).
  \end{align*}
  The bilinear forms $b_K(\cdot,\cdot)$ and $b_h(\cdot,\cdot)$ are defined as in the
  proof of Lemma \ref{lemma2}. Recall that the seminorm induced by  $b_h(\cdot,\cdot)$
  is equivalent to $\sqrt{h}|\cdot|_{H^1(\Omega)}$. Thus
  $$
  \|\bm f(u_h)-I_h\bm f(u_h)\|_{L^2(\Omega)}\le Ch|u_h|_{H^1(\Omega)}.$$
  Invoking the weak derivative estimate, we conclude that
  $|R_h^2(u_h,\phi_h)|=\mathcal O(h^{1/2})$.
  The consistency of \eqref{weakdiscr} with \eqref{weakcont}
  follows from the convergence results for
  individual terms. Consistency with the weak
  entropy inequality \eqref{EQ:weak_ent} can be shown
  in the same way (cf.~\cite{harten1983,entropyFD}).
\end{proof}

\begin{remark}
  The assumption that
  $u_0 \in H^2(\Omega)$ could be waived by using the $L^2$ projection
  operator $P_h:L^2(\Omega)\to V_h$ instead of the interpolation operator
  $I_h:C(\bar\Omega)\to V_h$
  to define the initial data $u_h(\cdot,0)$.
\end{remark}

\begin{remark}
  The estimate $|R_h^3(u_h,\phi_h)|=\mathcal O(h^{1/2})$ is valid for
  any choice of the correction factors $\alpha_{ij}\in[0,1]$ and
  corresponds to the worst-case scenario. If $\alpha_{ij}=1+\mathcal O(h)$
  for all pairs of nodes, then $|R_h^3(u_h,\phi_h)|=\mathcal O(h^{3/2})$. Hence,
  a good flux limiter does not inhibit optimal convergence.
\end{remark}

\section{Convergence to dissipative weak solutions of the Euler equations}
\label{sec:convergence}

As demonstrated in the previous section, a Lax--Wendroff-type theorem holds for the flux-corrected finite element method \eqref{mcl} using $f_{ij}^*=\alpha_{ij}d_{ij}(u_i-u_j)$ if the choice of the correction factors $\alpha_{ij}\in[0,1]$ guarantees the validity of conditions \eqref{property-idp} and \eqref{estab}. The MCL schemes under investigation are designed to provide the desired properties.
In Theorem~\ref{TH:LW}, we assumed that the numerical solutions $u_h$ converge to $u \in L^2(\Omega \times (0,T))$ strongly as the mesh parameter $h$ tends to zero. Consequently, the limit $u$ is a weak (entropy) solution of \eqref{ibvp} in Theorem~\ref{TH:LW}. In the case of the multi-dimensional Euler equations, it may happen, however, that approximate solutions do not converge strongly. This situation typically occurs in applications to the Kelvin--Helmholtz or Richtmyer--Meshkov problems; see, e.g., \cite{ FLSY21, feireisl2021,fjordholm_3, FKMT17, lukacova2023}. Consequently, one may ask whether numerical solutions converge in a weaker sense.  Indeed, Abgrall et al.~\cite{abgrall2023}, Feireisl, Luk\'a\v{c}ov\'a-Medvid'ov\'a et al.~\cite{feireisl2021, FLM20, FLSY21}, Luk\'a\v{c}ov\'a-Medvid'ov\'a and \"Offner~\cite{lukacova2023} have recently proved that consistent and stable approximations of multi-dimensional Euler equations converge weakly to very weak, so-called dissipative weak solutions. We also refer the reader to the related work of  Fjordholm et al.~\cite{fjordholm_3, FKMT17} and further literature on measure-valued solutions.

We note that the concept of weak entropy solutions may be not appropriate if such solutions are potentially nonunique. Indeed, it is well known that the multi-dimensional Euler equations may possess infinitely many weak entropy solutions for particular choices of initial data; see~De Lellis and Sz\'ekelyhidi \cite{dlsz1, dlsz2}. When it comes to proving convergence of numerical methods, it is worthwhile to work with alternative solution concepts. The ongoing quest for such concepts was initiated in 1985 by DiPerna~\cite{DiPerna1985}, who argued that the framework of probabilistic (measure-valued) solutions may be more suitable for hyperbolic conservation laws.


Restricting ourselves to the Euler system
in this section, we use the theoretical framework developed in \cite{abgrall2023, feireisl2021, lukacova2023} to study convergence of the flux-corrected finite element method \eqref{mcl} to dissipative weak (DW) solutions.
These generalized solutions, which we formally define below, can be viewed as a natural extension of a set of consistent and stable approximations in a weak topology. This means that DW solutions satisfy the Euler equations modulo defect measures that account for potential concentrations and oscillations. A DW solution can also be interpreted as an expected value with respect to the underlying Young measure. In essence, such solutions represent the observable (and computable) scales of a given problem.

To define the DW solutions as in \cite{feireisl2021}, we need to introduce the following notation. Let
$\mathcal{M}^+(\overline{\Omega})$ denote the set of all positive  \textit{Radon measures} that can be identified with the space of all linear forms on $C_c(\overline{\Omega}).$ If $\overline{\Omega}$ is compact, then
$[C_c(\overline{\Omega})]^*=\mathcal{M}(\overline{\Omega})$. Furthermore, we denote by  $\mathcal{M}^+(\overline{\Omega}; \R^{d\times d}_{sym})$ the set of symmetric positive-definite matrix-valued measures, i.e.,
\begin{align*}
\mathcal{M}^+(\overline{\Omega}, \R^{d\times d}_{sym}) = \bigg\{& \nu \in \mathcal{M}^+(\overline{\Omega}, \R^{d\times d}_{sym})
\big|
\int_{\overline{\Omega}} \phi(\xi \otimes\xi): \operatorname{d} \nu\geq0  \text{ for any } \xi \in \R^d, \phi \in C_c(\overline{\Omega}), \phi\geq 0
 \bigg\}.
\end{align*}
We are now ready to give the following formal definition of  DW solutions; cf.~\cite{feireisl2021}:
\begin{definition}[Dissipative weak solution of the Euler equations]\label{def_dmv}
Let $\Omega\subset \R^d$ be a bounded domain.  Given an initial condition $(\varrho_0, \bm{m}_0, \eta_0)$ with $\varrho_0>0$ and $\int_{\Omega}\left[ \frac{1}{2}
\frac{|\bm{m}_0|^2}{\varrho_0} + e(\varrho_0, \eta_0)\right]\dx<\infty$, we call $(\varrho, \bm{m}, \eta)$ a \textbf{dissipative weak solution}  of the complete Euler system
with periodic or no-flux boundary conditions  if the following assumptions are met:
\begin{itemize}
\item  $\varrho  \in C_{weak}([0,T];L^{\gamma}(\Omega) )$,
$\bm{m}  \in C_{weak}([0,T];L^{\frac{2\gamma}{\gamma+1}}(\Omega;\R^d) )$, \\
$\eta \in  L^{\infty}(0, T; L^{\gamma}(\Omega)) \cap BV_{weak}([0,T];L^{\gamma}(\Omega)).$
\item  There exists a measure $\mathfrak{E} \in  L^{\infty}(0,T;\mathcal{M}^+(\overline{\Omega}))$ (energy defect)  such that the integral \textbf{energy inequality}
\begin{equation*}
 \int_{\Omega} \left[   \frac{1}{2} \frac{|\bm{m}|^2}{\varrho} + \varrho e(\varrho,\eta )\right]  ( \tau,\cdot) \dx + \int_{\Omega}   \operatorname{d}{\mathfrak{E}}(\tau)
 \leq  \int_{\Omega} \left[ \frac{1}{2} \frac{|\bm{m}_0|^2}{\varrho_0} +\varrho_0 e(\varrho_0,\eta_0) \right] \dx
\end{equation*}
is satisfied for a.a.  $0\leq \tau\leq T$.
\item The weak formulation
\begin{equation*}
\left[  \int_{\Omega} \varrho \phi \dx \right]_{t=0}^{t=\tau}= \int_{0}^\tau \int_{\Omega}
\left[ \varrho \partial_t \phi + \bm{m} \cdot \nabla_{\mathbf{x}} \phi  \right] \dx\dt
\end{equation*}
of the  \textbf{continuity equation}
holds for any $0 \leq \tau \leq T$  and any $\phi \in C^\infty(\overline{\Omega}\times [0,T])$. 
\item The integral identity
 \begin{align*}
 \left[  \int_{\Omega} \bm{m} \cdot\bm \phi\operatorname{d} \mathbf{x}\right]_{t=0}^{t=\tau}
 = &\int_{0}^\tau \int_{\Omega}
 \bigg[ \bm{m} \cdot \partial_t \bm \phi+
 1_{\varrho>0} \frac{ \bm{m} \otimes  \bm{m} }{\varrho}   : \nabla_{\mathbf{x}} \bm \phi\\+&1_{\varrho>0} p(\varrho, \eta)  \operatorname{div}_{\mathbf{x}}\bm \phi\bigg]
 \dx\dt
   + \int_{0}^\tau \int_{\Omega} \nabla_{\mathbf{x}} \bm\phi: \operatorname{d}{\mathfrak{R}},
 \end{align*}
where $
 \mathfrak{R}\in L^{\infty} \left(0,T; \mathcal{M}\left(\overline{\Omega}, \R^{d\times d}_{sym}\right) \right)$ is the Reynolds stress tensor of the
\textbf{momentum equation},
holds for any $0 \leq \tau\leq T$ and any test function $\boldsymbol  \phi\in C^\infty(\overline{\Omega}\times [0,T];\R^d)$ that
additionally satisfies $\bm \phi \cdot \bm{n}|_{\partial \Omega}=0$
in case of no-flux boundary conditions.

 \item The  weak formulation
\begin{equation*}
\begin{aligned}
\left[\int_{\Omega} \eta \phi  \operatorname{d} \mathbf{x} \right]_{t=\tau_1-}^{t=\tau_2+}
\leq&  \int_{\tau_1}^{\tau_2} \int_{\Omega} \left[
\eta \partial_t \phi +\est{\nu; 1_{\tilde{\varrho}>0} \left( \tilde{\eta} \frac{\tilde{\bm{m}}}{\tilde{\varrho}} \right) } \cdot \nabla_\mathbf{x} \phi  \right] \dx\dt\\
\eta(0^-, \cdot) =& \eta_0
\end{aligned}
\end{equation*}
 of the  \textbf{entropy  inequality}
holds for any $0\leq \tau_1 \leq \tau_2 <T$,
any $\phi \in C_c^\infty(\Omega\times (0,T)), \phi\geq 0$,
and a parametrized (Young) measure
$\{ \nu_{\mathbf{x},t}\}_{(\mathbf{x},t) \in \Omega\times (0,T)}$ such that
\begin{equation}\label{eq:Young}
\begin{aligned}
\nu \in L^{\infty}(\Omega\times (0,T);\mathcal{P}(\mathcal{F})),\ \mathcal{F}=\left\{ \tilde{\varrho} \in \R, \tilde{\bm{m}}\in \R^d, \tilde{\eta}\in \R  \right\};\\
\est{\nu, \tilde{\varrho}} =\varrho, \, \est{\nu, \tilde{\bm{m}}} =\bm{m},\, \est{\nu, \tilde{\eta}} =\eta,\\
\nu_{\mathbf{x},t}\left\{ \tilde{\varrho} \geq 0, (1-\gamma)\tilde{\eta} \geq \underline{s} \tilde{\varrho} \right\}=1 \text{ for a.a. } (\mathbf{x},t) \in \Omega\times (0,T).
\end{aligned}
\end{equation}
\item There exist constants $c_1\ge 0$ and  $c_2\ge c_1$  such that the  \textbf{defect compatibility condition}
$$
c_1 \mathfrak{E} \leq \operatorname{tr} [\mathfrak{R}] \leq c_2 \mathfrak{E}
$$
holds.
\end{itemize}
\end{definition}
\begin{remark}
A key property of a DW solution is its compatibility with a classical solution, see \cite[Theorem~5.7]{feireisl2021}. More precisely, if a DW solution $(\varrho, \bm{m}, \eta)$ satisfies
\begin{equation}\label{eq_regularity}
\varrho \in C^1(\overline{\Omega}\times [0,T]), \; \inf_{\Omega\times (0,T)} \varrho>0, \; \bm{u} \in C^1(\overline{\Omega}\times [0,T]; \R^d), \; \eta \in C^1(\overline{\Omega}\times [0,T]),
\end{equation}
 then $(\varrho, \bm{m}, \eta)$  is a classical solution of the Euler system.

In addition, the DW-strong uniqueness principle implies that if a classical solution to the Euler system exists, then
all DW solutions corresponding to the same initial data coincide with the classical solution; see \cite[Theorem~6.2]{feireisl2021}.
\end{remark}
It is worth noting that we interchanged the roles of the total energy and entropy in Definition~\ref{def_dmv}. In view of the strict positivity of density and pressure, a one-to-one mapping exists between the conservative variables $(\varrho, \bm{m}, E)$ and the entropy-conservative variables
$(\varrho, \bm{m}, \eta)$. Although the flux-corrected finite element method \eqref{mcl} evolves the conservative variables, the MCL algorithm may be configured to impose inequality constraints on $(\varrho, \bm{m}, \eta).$  

The classical {\em Lax equivalence theorem} is not applicable to nonlinear problems, but a typical proof of convergence to DW solutions relies on the assumption that the method under investigation is {\em consistent} and {\em stable} in a suitably defined sense \cite{feireisl2021}. Thus we need to show that this is the case for our flux-corrected finite element discretization~\eqref{mcl} of the Euler system.
\medskip

Let us first give an appropriate definition of {\em stability} for a sequence $(\varrho_h, \bm{m}_h, \eta_h)_{h \searrow 0}$ of numerical solutions to the Euler system.
  In what follows, we will assume that the approximations remain in a non-degenerate region, which is a physically reasonable hypothesis. More precisely, we assume that
there exist two positive constants $\underline{\varrho}$ and $\overline{E}$ such that
\begin{equation}\label{vacuum}
0 <\underline{\varrho} \leq \varrho_h(t),  \qquad E_h(t) \leq \overline{E} \ \text{ uniformly for }\  h\to 0.
\end{equation}
The imposition of an upper bound on the energy in  \eqref{vacuum} implies  that the velocity $|\bm{v}_h|$ is bounded since
$
 |\bm{v}_h|^2\leq \frac{2E_h}{\varrho_h} \leq \frac{2\overline{E}}{\underline{\varrho}}<C.
$
As explained in  \cite{feireisl2020b, lukacova2023_b}, one can also use  \eqref{vacuum} to show  that the pressure and temperature are bounded from above and below.
Consequently, \eqref{vacuum} implies that the sequence of numerical solutions is uniformly bounded in $L^\infty(\Omega \times (0,T)).$

\medskip
%

We proceed with showing {\em consistency} of the flux-corrected finite element  method \eqref{mcl}.

\begin{theorem}[Consistency of the MCL scheme for the Euler system]\label{th_consistency}
  Let $u_h= (\varrho_h, \bm{m}_h, E_h)^\top$ be a numerical solution obtained
  with \eqref{mcl}
  on a time interval $[0,T]$ using a mesh with spacing $h$ and
  the initial data $u_{h,0}$. Denote
  the consistency error w.r.t. $v\in\{\varrho, \bm{m}, \eta\}$ by~$e_{v_h}$.
  Assume that the sequence of meshes $\{\mathcal{T}_h\}_{h \searrow 0}$ is shape regular and the assumptions of Lemma \ref{lemma1} are satisfied. If, additionally, condition \eqref{vacuum} holds, then
the following assertions are true for all $\tau \in (0,T]$:
 \begin{itemize}
  \item for all $\phi \in C^{p+1}( \overline{\Omega} \times [0,T])$:
  \begin{equation}\label{eq:consistency_rho}
 \left[  \int_{\Omega} \varrho_h \phi\ \dx \right]_{t=0}^{t=\tau} =\int_0^\tau \int_{\Omega}[ \varrho_h \partial_t \phi +\bm{m}_h \cdot \nabla_{\mathbf{x}} \phi] \dx\dt
 +\int_0^\tau e_{\varrho_h} (t,\phi) \dt;
  \end{equation}
  \item for all $\bm\phi \in C^{p+1}( \overline{\Omega} \times [0,T];\R^d)$:
  \begin{equation}\label{eq:consistency_m}
  \begin{aligned}
 \left[  \int_{\Omega} \bm{m}_h\cdot\bm \phi\ \dx \right]_{t=0}^{t=\tau} =&\int_0^\tau \int_{\Omega}\left[ \bm{m}_h\cdot \partial_t \bm \phi+\frac{\bm{m}_h\otimes\bm{m}_h}{\varrho_h} :  \nabla_{\mathbf{x}}\bm\phi +p_h \operatorname{div}_{\mathbf{x}}\bm \phi\right] \dx \dt \\
 &+ \int_0^\tau e_{\bm{m}_h} (t,\bm\phi) \dt;
 \end{aligned}
  \end{equation}
  \item for all $\phi \in C^{p+1}( \overline{\Omega} \times [0,T]), \; \phi\geq 0$:
  \begin{equation}\label{eq:consistency_eta}
 \left[  \int_{\Omega} \eta_h \phi \ \dx\right]_{t=0}^{t=\tau} \leq \int_0^\tau \int_{\Omega} [\eta_h \partial_t \phi + (\eta_h \bm{v}_h) \cdot \nabla_\mathbf{x} \phi] \dx\dt + \int_0^\tau e_{\eta_h} (t,\phi) \dt;
  \end{equation}
  \item total energy is conserved, i.e.,
  \begin{equation}
  \label{eq:consistency_energy}
  \int_{\Omega} E_h(\tau)\dx =\int_{\Omega} E_{h}(0) \dx;
  \end{equation}
\item for $v\in\{\varrho, \bm{m}, \eta\}$, the
  consistency error $e_{v_h}$  tends to zero under mesh refinement
  \begin{equation}\label{eq:consistency_error}
   \|{e_{v_h}}\|_{L^1(0,T)} \to 0\ \ \text{ as } \ h\to 0.
  \end{equation}
 \end{itemize}
\end{theorem}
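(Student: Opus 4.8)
The plan is to establish the four integral identities/inequalities \eqref{eq:consistency_rho}--\eqref{eq:consistency_energy} by mimicking the semi-discrete Lax--Wendroff argument of Theorem~\ref{TH:LW}, applied componentwise to the Euler system, and then to prove the decay \eqref{eq:consistency_error} of each consistency error using Lemmas~\ref{lemma1} and~\ref{lemma2} together with the non-degeneracy hypothesis \eqref{vacuum}. Concretely, for the density and momentum equations I would test the $i$th equation of the MCL scheme \eqref{mcl} (with $f_{ij}^*=\alpha_{ij}d_{ij}(u_i-u_j)$) against the nodal values $\phi_i(t)=(I_h\phi)(\mathbf{x}_i,t)$ of a smooth test function, sum over $i$, integrate over $[0,\tau]$, and perform discrete integration by parts using the skew-symmetry \eqref{prop:cijdij} and the partition-of-unity relation \eqref{prop:cij}. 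This produces exactly the desired weak forms with $\varrho_h,\bm{m}_h$ and the nonlinear fluxes $\bm{m}_h\otimes\bm{m}_h/\varrho_h$, $p_h$ (which are well-defined and bounded thanks to \eqref{vacuum}), up to a remainder $e_{v_h}(t,\phi)$ that collects (i) the mass-lumping error $m_h(u_h,\dot\phi_h)$, (ii) the group-FEM quadrature error $\int_\Omega\nabla\phi_h\cdot[\bm f(u_h)-I_h\bm f(u_h)]\dx$, and (iii) the algebraic-stabilization term $d_h^*(u_h;u_h,\phi_h)$, precisely as in \eqref{eq_consistency_error}.

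For the entropy inequality \eqref{eq:consistency_eta} I would instead start from the semi-discrete entropy inequality \eqref{semi-ineq} of Theorem~\ref{thm:EntropyTadmor}, which holds because the MCL limiter is constructed to satisfy \eqref{estab}; testing \eqref{semi-ineq} against a nonnegative $\phi_i(t)\ge0$ (using that interpolation of a nonnegative function is nonnegative for $\mathbb{P}_1$ elements) and summing/integrating yields \eqref{eq:consistency_eta} with the entropy flux $\eta_h\bm{v}_h$ and an analogous error term built from the entropy-numerical-flux $g_{ij}$ and the entropy-potential mismatch. Total energy conservation \eqref{eq:consistency_energy} follows immediately by taking the constant test function $\phi\equiv1$ in the energy component of \eqref{mcl}: then $\nabla\phi_h=0$, the antidiffusive/diffusive fluxes telescope by skew-symmetry $f_{ij}^*=-f_{ji}^*$, $\bm{c}_{ij}=-\bm{c}_{ji}$, and $\sum_i m_i\dot E_i=0$, so no error term survives.

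The decisive step is \eqref{eq:consistency_error}. Here I would bound each of the three error contributions in $L^1(0,T)$ by the same route as in the proof of Theorem~\ref{TH:LW}: Cauchy--Schwarz on the bilinear forms $m_h(\cdot,\cdot)$, $d_h^*(\cdot,\cdot)$ and $b_h(\cdot,\cdot)$, whose induced seminorms are equivalent to $h|\cdot|_{H^1(\Omega)}$ resp. $\sqrt{h}|\cdot|_{H^1(\Omega)}$ by Lemma~\ref{lemma2} (and its $m_{ij}=\mathcal O(h^d)$ variant), combined with the weak-BV bound $\int_0^T\sqrt{h}\,|u_h(\cdot,t)|_{H^1(\Omega)}\dt\le C_T$ that Lemmas~\ref{lemma1}--\ref{lemma2} deliver, and the uniform boundedness of the $H^1$ seminorms of $\phi_h(\cdot,t)$ and $\dot\phi_h(\cdot,t)$. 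This gives $\|e_{v_h}\|_{L^1(0,T)}=\mathcal O(h^{1/2})\to0$. The genuine obstacle compared with the scalar Theorem~\ref{TH:LW} is that the Euler fluxes $\bm{m}\otimes\bm{m}/\varrho$ and $p(\varrho,\eta)$ are \emph{not globally Lipschitz} in the conservative variables; the Lipschitz estimate $|\bm f(u_h)-\bm f(u_j)|\le\lambda|u_h-u_j|$ used to control $\|\bm f(u_h)-I_h\bm f(u_h)\|_{L^2}$ must be replaced by a \emph{local} Lipschitz bound, which is exactly what \eqref{vacuum} supplies: since $\varrho_h\ge\underline\varrho>0$ and $E_h\le\overline E$ uniformly, the numerical states lie in a fixed compact non-degenerate subset of phase space on which $\bm f$ is $C^1$, hence Lipschitz with an $h$-independent constant. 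One also has to handle the entropy flux $\eta_h\bm v_h=\eta_h\bm m_h/\varrho_h$ by the same local-Lipschitz device and to note that the one-to-one change of variables $(\varrho,\bm m,E)\leftrightarrow(\varrho,\bm m,\eta)$ is a $C^1$-diffeomorphism on that compact set, so that convergence in one set of variables transfers to the other. Once the local Lipschitz constants are fixed by \eqref{vacuum}, all remaining estimates are verbatim repetitions of the computations in the proof of Theorem~\ref{TH:LW}, so I would state them briefly and refer back rather than redo them.
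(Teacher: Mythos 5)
Your proposal is correct and follows essentially the same route as the paper's proof: test the MCL scheme \eqref{mcl} against the interpolant of the test function, integrate by parts discretely, collect the lumping/group-FEM/stabilization residuals and bound them in $L^1(0,T)$ via Lemmas~\ref{lemma1} and~\ref{lemma2} exactly as in Theorem~\ref{TH:LW}, obtain energy conservation from the antisymmetry $g_{ij}^*=-g_{ji}^*$ of the numerical fluxes, and derive \eqref{eq:consistency_eta} from the semi-discrete entropy inequality. Your explicit remark that the Euler fluxes are only locally Lipschitz and that \eqref{vacuum} confines the states to a compact non-degenerate set yielding an $h$-independent Lipschitz constant makes precise a point the paper's proof leaves implicit, but it does not alter the argument.
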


\begin{proof}

First, we realize that for all  $\phi \in C^{p+1}( \overline{\Omega}\times [0,T], \R^{d+2})$ we have the identity
\begin{equation}\label{eq_consistent_2}
  \left[\int_{\Omega} u_h  \ \dx \right]_{t=0}^{t=\tau}
  = \int_0^\tau \int_{\Omega} \td{}{t} \left( u_h \phi\right)\dx\dt
  = \int_0^\tau \int_{\Omega}[ u_h \partial_t \phi +  \phi \partial_t u_h]   \dx\dt.
\end{equation}
Next, we use the interpolation error estimate \eqref{interpolation_error} for the difference between the test function $\phi \in C^{p+1}( \overline{\Omega}\times [0,T], \R^{d+2})$ and its interpolant $\phi_h=I_h\phi$. This gives (cf. \cite{brenner2008})
\begin{equation}
\label{eq30}
 \int_0^\tau \int_{\Omega}   \phi \partial_t u_h   \dx \dt=
   \int_0^\tau \int_{\Omega}  \underbrace{\left(  \phi-\ \phi_h \right)}_{\mathcal{O}(h^{p+1})}  \partial_t u_h   \dx \dt
   + \int_0^\tau  \int_{\Omega}    \phi_h   \partial_t u_h  \dx\dt.
\end{equation}
The first term on the right-hand side vanishes as $h \to 0$ due to the boundedness of $\partial_t u_h .$  Indeed, the nodal states $u_i(t)=u_h(\mathbf x_i,t)$ are obtained by solving the nonlinear ODE system \eqref{mcl}. Since they are uniformly bounded by \eqref{vacuum}, and the flux function is Lipschitz continuous,  $u_h$ is a $C^1$ function in time
that is uniformly bounded in the limit $h \searrow 0.$

To estimate the second term on the right-hand side of \eqref{eq30}, we invoke
the definition  \eqref{mcl} of $\dot u_i(t)=\partial_t u_h(\mathbf x_i)$ for the MCL scheme.
Following the derivation of \eqref{weakdiscr}, we perform integration by
parts for the volume integral and find that
    \begin{equation*}
    \int_0^\tau \int_\Omega \phi_h \partial_t u_h =   \int_0^\tau \int_\Omega  \nabla \phi_h\cdot \bm f(u_h) \dx \dt  - \tilde{R}_h(u_h,\phi_h),
 \end{equation*}
where
    \begin{align*}
     \tilde{R}_h(u_h,\phi_h)=
     &\underbrace{\int_0^T \int_\Omega     \nabla \phi_h\cdot
     [\bm f(u_h)-I_h\bm f(u_h)] \dx \dt }_{\tilde{R}_h^2(u_h,\phi_h)}\\
     &+\underbrace{\int_0^T\left[\sum_{i=1}^{N_h}\phi_i(t)\sum_{j\in\Nis}
         (1-\alpha_{ij}(u_h))d_{ij}(u_i(t)-u_j(t))\right]\dt}_{\tilde{R}_h^3(u_h,\phi_h)}
   \end{align*}
    is the consistency error due to interpolation and limiting. The integrals
    denoted by $\tilde{R}_h^2(u_h,\phi_h)$ and $\tilde{R}_h^3(u_h,\phi_h)$ also
    appear in \eqref{eq_consistency_error}.  Following the
    proof of Theorem \ref{TH:LW}, we deduce
  \begin{equation}
  \label{eq:ce}
  | \tilde{R}_2(u_h, \phi_h)| =\mathcal{O}(h^{1/2}), \ \qquad \ | \tilde{R}_3(u_h, \phi_h)| =\mathcal{O}(h^{1/2}).
  \end{equation}
In particular, we have proven that the consistency errors  in the density and momentum equation tend to zero under mesh refinement.
The total energy is conserved if
\begin{equation}\label{energy_balance}
E_{\Omega(t)} \equiv \sum_{i=1}^{N_h} m_i E_i(t) = E_{\Omega(0)} \equiv \sum_{i=1}^{N_h} m_i E_i(0).
\end{equation}
The validity of \eqref{energy_balance} is obvious if the nodal values
$E_i(t)$ are evolved using
the energy equation of the semi-discrete MCL scheme
\eqref{mcl-lemma1}. Indeed, the
numerical fluxes $g_{ij}^*=(1-\alpha_{ij})d_{ij}(u_j-u_i)-
(\bm{f}_j+\bm{f}_i)\cdot \bm{c}_{ij}=-g_{ji}^*$ cancel out upon summation over
$i=1,\ldots,N_h$. In general, a flux-corrected finite element discretization of
the form \eqref{mcl} is globally
conservative because the right-hand side admits a decomposition into
$g_{ij}^*=d_{ij}(u_j-u_i)+f_{ij}^*-
(\bm{f}_j+\bm{f}_i)\cdot \bm{c}_{ij}=-g_{ji}^*$.

It remains to show consistency formulation for the entropy inequality. Arguing as in the proofs of 
\eqref{eq:consistency_rho}, \eqref{eq:consistency_m},  we can show that the consistency error in the entropy inequality vanishes as
$h \to 0.$ This concludes the proof.
\end{proof}

As shown in Section \ref{sec:stability}, our  monolithic convex limiting strategy makes the spatial semi-discretization \eqref{mcl} invariant domain preserving. In particular, the density and pressure are guaranteed to stay nonnegative. Moreover, the semi-discrete entropy inequality \eqref{semi-ineq} holds if the flux limiter enforces \eqref{estab} in addition to \eqref {property-idp}. Weak BV estimates \eqref{BV} and the applicability of Theorem \ref{th_consistency}
follow from the entropy stability of \eqref{mcl}.
%
%
Having shown the consistency and stability of our MCL scheme, we can proceed to the analysis of convergence.

\begin{theorem}[Weak convergence of the MCL scheme for the Euler system]\label{eq:theorem_weak} Consider a family $\{u_h\}_{h \searrow 0} \equiv \{(\varrho_h, \bm{m}_h, \eta_h)^\top \}_{h \searrow 0}$ of numerical solutions generated using \eqref{mcl} on a shape regular sequence of meshes $\{\mathcal{T}_h\}_{h \searrow 0}$. Assume that the assumptions of Lemma~\ref{lemma1} and condition \eqref{vacuum} hold.
Then there exists a subsequence of $u_h$ (denoted again by $u_h$) such that
\begin{equation}\label{eq_bounds}
\begin{aligned}
\varrho_h &\to \varrho \text{ weakly-(*) in } L^{\infty}(\Omega\times (0,T)),\\
\bm{m}_h &\to \bm{m} \text{ weakly-(*) in } L^{\infty}(\Omega\times (0,T); \R^d)),\\
\eta_h &\to \eta \text{ weakly-(*) in } L^{\infty}(\Omega\times (0,T)) \qquad \mbox{ as } h \to 0,
\end{aligned}
\end{equation}
where  $(\varrho, \bm{m}, \eta)$ is a DW solution of the Euler system.
Moreover, we have
\begin{eqnarray*}
&&E_h \equiv E(\varrho_h, \bm{m}_h, \eta_h) = \left[ \frac 1 2  \frac{|\bm{m}_h |^2}{\varrho_h} + \varrho_h e(\varrho_h, \eta_h) \right] \to
\left< \nu; E(\tilde{\varrho}, \tilde{\bm{m}}, \tilde{\eta}) \right> \\
&&\phantom{mmmmmmmm}\text{ weakly-(*) in }  L^\infty(0,T; \mathcal{M}(\overline{\Omega})) \qquad \mbox{ as } h \to 0.
\end{eqnarray*}
\end{theorem}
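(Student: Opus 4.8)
The plan is to combine the consistency of the MCL scheme (Theorem~\ref{th_consistency}) with the abstract convergence machinery for dissipative weak solutions developed in \cite{feireisl2021, abgrall2023, lukacova2023}. First I would extract weak-$(*)$ limits: condition \eqref{vacuum} gives a uniform $L^\infty$ bound on $(\varrho_h,\bm m_h,\eta_h)$ (and, via the stated pressure/velocity bounds, on $p_h$ and $\bm v_h$), so by Banach--Alaoglu a subsequence converges weakly-$(*)$ in $L^\infty(\Omega\times(0,T))$ to some $(\varrho,\bm m,\eta)$, establishing \eqref{eq_bounds}. Simultaneously, the nonlinear composite quantities that appear in the momentum and entropy balances --- namely $\mathbf{1}_{\varrho_h>0}\bm m_h\otimes\bm m_h/\varrho_h$, $p_h$, $\eta_h\bm v_h$, and the energy density $E_h$ --- are bounded in $L^\infty$, hence generate (after passing to a further subsequence) a Young measure $\{\nu_{\mathbf x,t}\}$ on the phase space $\mathcal F$ together with weak-$(*)$ limits; the fundamental theorem of Young measures identifies these limits with the barycenters $\est{\nu;\cdot}$, which in particular yields the stated weak-$(*)$ convergence of $E_h$ to $\est{\nu;E(\tilde\varrho,\tilde{\bm m},\tilde\eta)}$ in $L^\infty(0,T;\mathcal M(\overline\Omega))$.

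Next I would pass to the limit in each consistency relation of Theorem~\ref{th_consistency}. In \eqref{eq:consistency_rho} the integrands are linear in $(\varrho_h,\bm m_h)$ and the consistency error satisfies \eqref{eq:consistency_error}, so the limit is exactly the weak continuity equation of Definition~\ref{def_dmv}; here the time-continuity class $C_{weak}([0,T];L^\gamma)$ is recovered by the usual argument that $\partial_t\varrho$ is bounded in a negative Sobolev space. For the momentum equation \eqref{eq:consistency_m}, the linear term $\bm m_h\cdot\partial_t\bm\phi$ passes directly, while the nonlinear terms $\bm m_h\otimes\bm m_h/\varrho_h$ and $p_h$ converge weakly-$(*)$ to their Young-measure barycenters; the \emph{difference} between these barycenters and the ``naive'' nonlinear functions of the limits $(\varrho,\bm m,\eta)$ is precisely what defines the Reynolds defect $\mathfrak R\in L^\infty(0,T;\mathcal M(\overline\Omega,\R^{d\times d}_{sym}))$, and positivity/boundedness of $\mathfrak R$ follows from Jensen's inequality applied to the convex map $(\varrho,\bm m)\mapsto\bm m\otimes\bm m/\varrho$ together with the $L^\infty$ bounds. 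The energy defect $\mathfrak E$ is the weak-$(*)$ limit of $E_h-E(\varrho,\bm m,\eta)$, and conservation of total energy \eqref{eq:consistency_energy} upgraded to the integral \emph{inequality} via weak lower semicontinuity of the convex energy gives the energy inequality. For the entropy inequality \eqref{eq:consistency_eta}, weak-$(*)$ convergence of $\eta_h$, the Young-measure representation of $\mathbf{1}_{\tilde\varrho>0}\tilde\eta\tilde{\bm m}/\tilde\varrho$, and $e_{\eta_h}\to0$ yield the required inequality, and the semi-discrete entropy stability ensures the pointwise constraint $\nu_{\mathbf x,t}\{\tilde\varrho\ge0,\ (1-\gamma)\tilde\eta\ge\underline s\,\tilde\varrho\}=1$ by propagating the entropy minimum principle encoded in the invariant domain $\mathcal G$.

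The final ingredient is the defect compatibility condition $c_1\mathfrak E\le\operatorname{tr}[\mathfrak R]\le c_2\mathfrak E$. This I would obtain as in \cite[Lemma~on~defect~compatibility]{feireisl2021}: both defects are weak-$(*)$ limits of Young-measure variances, $\mathfrak E=\est{\nu;E}-E(\est{\nu;\cdot})$ and $\operatorname{tr}[\mathfrak R]=\est{\nu;|\tilde{\bm m}|^2/\tilde\varrho}-|\bm m|^2/\varrho$, and the algebraic relation $E=\tfrac12|\bm m|^2/\varrho+\varrho e$ with $\varrho e$ controlled above and below by the entropy together with the two-sided bounds \eqref{vacuum} gives the desired sandwich with explicit constants depending only on $\gamma$, $\underline\varrho$, $\overline E$. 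Assembling all the above against arbitrary admissible test functions verifies every bullet of Definition~\ref{def_dmv}, so $(\varrho,\bm m,\eta)$ is a DW solution. The main obstacle, and the step requiring the most care, is the simultaneous construction of a \emph{single} Young measure consistent with all the nonlinear flux terms and the entropy constraint, and then the bookkeeping that identifies the two defect measures $\mathfrak E,\mathfrak R$ as variances of that same Young measure --- this is where the structural hypotheses \eqref{vacuum} and the semi-discrete entropy stability of Theorem~\ref{thm:EntropyTadmor} are essential, since without a uniform non-degeneracy bound the map $\bm m\mapsto\bm m\otimes\bm m/\varrho$ loses the convexity and coercivity needed to control concentrations.
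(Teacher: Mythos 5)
Your proposal is correct and follows essentially the same route as the paper, which itself only gives a sketch: uniform $L^\infty$ bounds from \eqref{vacuum} plus Banach--Alaoglu for the weak-$(*)$ subsequence, the fundamental theorem of Young measures to identify weak limits of the nonlinear terms in the consistency relations of Theorem~\ref{th_consistency}, and identification of the defects $\mathfrak{E}$, $\mathfrak{R}$ in the limit passage, all following \cite[Theorem~5.1]{feireisl2021} and \cite{lukacova2023}. Your version supplies more detail than the paper's sketch (e.g.\ the Jensen-inequality argument for positivity of $\mathfrak{R}$ and the variance identification behind the defect compatibility condition, where one should also include the pressure contribution in $\operatorname{tr}[\mathfrak{R}]$), but the underlying strategy is the same.
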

\begin{proof}[Sketch of the proof]
  The proof follows the analysis of finite volume and discontinuous Galerkin methods in  \cite[Theorem~5.1]{feireisl2021} and \cite{lukacova2023}.
The main steps can be outlined as follows:
\begin{enumerate}
\item Stability of the MCL scheme  \eqref{mcl} implies that
$ u_h = (\varrho_h, \bm{u}_h, \eta_h)^\top $ is uniformly bounded in $L^\infty(\Omega \times (0,T))$ w.r.t. $h \to 0.$
Consequently, there exists a subsequence of $u_h$ that is weakly-(*) convergent in $L^\infty(\Omega \times (0,T)).$
\item By the fundamental theorem of Young measures, the sequence $\{u_h\}_{h \searrow 0}$ generates a space-time parametrized probability measure. This Young measure, $\nu_{\mathbf{x},t}$, is a suitable tool for identifying weak limits of  (smooth) nonlinear functions of $u_h.$  This allows us to pass to the weak-(*) limit in all components of the consistency error as $h \to 0$.
\item  We identify the limit as an expected value of $(\varrho, \bm{m}, \eta)$ w.r.t. the Young measure $\nu.$ The corresponding functions of space and time satisfy the DW formulation. The energy balance is relaxed. Only the global energy inequality holds, and the energy defect arises in the process of passing to a weak limit.
\end{enumerate}
\end{proof}
Weak convergence to a DW solution is difficult to verify in numerical experiments. Therefore, it is appropriate to approximate the DW solution by a strongly convergent sequence. This task can be accomplished by taking the Ces\`aro averages over different mesh resolutions and proving what is
 commonly referred to as $\mathcal{K}$-convergence; see~Feireisl, Luk\'a\v{c}ov\'a-Medvid'ov\'a et al.~\cite{FLSY21}. As demonstrated in \cite[Theorem 10.5]{feireisl2021}, \textbf{strong convergence of the Ces\`aro averages} to a DW solution, as well as strong convergence of the approximate deviation of the associated Young measures, can be shown in this way. In this context, strong convergence of Ces\`aro averages means that, up to a subsequence of $u_{h_n}=(\varrho_{h_n},\bm{m}_{h_n},\eta_{h_n})$, we have
\begin{equation*}
\frac{1}{N} \sum_{n=1}^N u_{h_n} \to u \text{ as } N\to \infty \text{ in } L^q(\Omega\times (0,T), \mathbb{R}^{d+2}) \text{ for any } 1\leq q<\infty.
\end{equation*}
Furthermore, if we impose additional requirements on the regularity of the limit $u$, we can establish strong convergence by adapting \cite[Theorem 10.6]{feireisl2021} as follows.
\begin{theorem}[Strong convergence of the MCL scheme for the Euler system]\label{th_convergence}
  Let the sequence $\{u_h\}_{h \searrow 0}$ of numerical solutions
  $u_h=(\varrho_h, \bm{m}_h, \eta_h)^\top$ be generated using \eqref{mcl}
  on the interval $[0,T]$. Assume that the initial data $u_{h,0}=(\varrho_{h,0}, \bm{m}_{h,0},\eta_{h,0})^\top$ satisfy $\varrho_0 \geq \underline{\varrho}>0, (1-\gamma)\eta_0 \geq \underline{\varrho s}$, that the sequence of meshes $\{\mathcal{T}_h\}_{h \searrow 0}$ is shape regular, and that Lemma \ref{lemma1} is applicable. Finally, suppose that condition \eqref{vacuum} holds.
Then the following assertions are true:
\begin{itemize}
\item \textbf{Compatibility with weak solutions}:
  If a weak-(*) limit
  $u=(\varrho, \bm{m}, \eta)^\top$ of the sequence $\{u_h\}_{h\searrow 0}$ is a weak entropy solution corresponding to the initial data $u_0$, then
$\nu_{\mathbf{x},t}=\delta_{u(\mathbf{x},t)} $ for a.a. $(\mathbf{x},t) \in \Omega\times (0,T)$, and
\begin{align*}
(\varrho_h,\bm{m}_h, \eta_h) &\to (\varrho,\bm{m}, \eta) \text{ in } L^q(\Omega\times (0,T);\R^4),\\
E(u_h)=\frac12 \frac{|\bm{m}_{h}|^2}{\varrho_{h}} + \varrho_{h} e(\varrho_{h}, \eta_{h}) &\to \frac12 \frac{|\bm{m}|^2}{\varrho} + \varrho e(\varrho, \eta)  \text{ in } L^q(\Omega\times (0,T))
\end{align*}
for any $1 \leq q <\infty.$
\item \textbf{Compatibility with strong solutions}:
  Suppose that, for a given initial data~$u_0$,
  the Euler system admits a strong solution $u$ in the class
$\varrho, \eta \in W^{1,\infty}(\Omega\times (0,T)), \bm{m} \in W^{1,\infty}(\Omega\times (0,T); \R^d)$,
$\varrho \geq \underline{\varrho}>0$ in $\Omega\times[0,T]$.
Then for any $1\leq  q<\infty$ and $h\to 0$
\begin{align*}
(\varrho_{h},\bm{m}_h, \eta_h) &\to (\varrho,\bm{m}, \eta) \text{ in } L^q(\Omega\times (0,T); \R^4),\\
E(u_h)&\to E(u)  \text{ in } L^q(\Omega\times (0,T)).
\end{align*}
\item \textbf{Compatibility with classical solutions}:
Let $u=(\varrho, \bm{m}, \eta)$ be a weak solution such that
 $\varrho\in C^1( \overline{\Omega} \times [0,T])$, $\varrho\geq \overline{\varrho}>0,\; \bm{m}\in C^1(\overline{\Omega}\times [0,T];\R^d),\; \eta\in C^1 ( \overline{\Omega} \times [0,T])$. Then $u$ is a classical solution to the Euler system and
\begin{align*}
(\varrho_{h},\bm{m}_h, \eta_h) &\to (\varrho,\bm{m}, \eta) \text{ in } L^q(\Omega\times (0,T),\R^4)
\end{align*}
as $h\to 0$, for any $q\geq1$.
\end{itemize}
\end{theorem}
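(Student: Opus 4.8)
By Theorem~\ref{eq:theorem_weak}, a subsequence of $\{u_h\}$ converges weakly-(*) to a DW solution $(\varrho,\bm{m},\eta)$ and generates a space--time Young measure $\nu_{\mathbf{x},t}$. The plan is to upgrade this to strong convergence by showing that, in each of the three regularity scenarios, the defect measures $\mathfrak{E}$ and $\mathfrak{R}$ vanish and $\nu_{\mathbf{x},t}$ collapses to the Dirac mass $\delta_{u(\mathbf{x},t)}$. The decisive structural fact is the strict convexity of the energy $E(\varrho,\bm{m},\eta)=\tfrac12|\bm{m}|^2/\varrho+\varrho e(\varrho,\eta)$ in the entropy-conservative variables on the admissible cone $\{\varrho>0,\ (1-\gamma)\eta\ge\underline{s}\,\varrho\}$, which is precisely why Definition~\ref{def_dmv} is formulated in the variables $(\varrho,\bm{m},\eta)$ rather than $(\varrho,\bm{m},E)$. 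We combine it with the discrete energy conservation \eqref{eq:consistency_energy} from Theorem~\ref{th_consistency} and with the weak--strong uniqueness and regularity principles for DW solutions recalled after Definition~\ref{def_dmv}.

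I would first prove \textbf{compatibility with weak solutions}. From $u_{h,0}\to u_0$ (interpolation or $L^2$ projection of the initial data, using $\varrho_0\ge\underline{\varrho}$) we obtain $\int_\Omega E(u_{h,0})\,\dx\to\int_\Omega E(u_0)\,\dx$, and together with \eqref{eq:consistency_energy} this yields $\lim_{h\to0}\int_\Omega E_h(\tau)\,\dx=\int_\Omega E(u_0)\,\dx$ for a.a.\ $\tau$. The weak-(*) limit $\est{\nu_\tau;E}$ of $E_h(\tau)$ decomposes into the resolved part $E(\varrho(\tau),\bm{m}(\tau),\eta(\tau))$ and the energy defect $\mathfrak{E}(\tau)\ge0$, so the energy inequality of Definition~\ref{def_dmv} is in fact the equality $\int_\Omega E(\varrho(\tau),\bm{m}(\tau),\eta(\tau))\,\dx+\int_{\overline{\Omega}}\D\mathfrak{E}(\tau)=\int_\Omega E(u_0)\,\dx$. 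If $u$ is a weak entropy solution, its total energy is conserved, $\int_\Omega E(\varrho(\tau),\bm{m}(\tau),\eta(\tau))\,\dx=\int_\Omega E(u_0)\,\dx$, forcing $\mathfrak{E}\equiv0$. Vanishing of $\mathfrak{E}$ removes the oscillation defect, so the Jensen equality $\est{\nu_{\mathbf{x},t};E}=E(\est{\nu_{\mathbf{x},t};(\tilde{\varrho},\tilde{\bm{m}},\tilde{\eta})})$ together with strict convexity of $E$ forces $\nu_{\mathbf{x},t}=\delta_{u(\mathbf{x},t)}$ a.e.; via the compatibility condition $c_1\mathfrak{E}\le\operatorname{tr}[\mathfrak{R}]\le c_2\mathfrak{E}$ and positivity of $\mathfrak{R}$ one also gets $\mathfrak{R}\equiv0$. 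A Dirac Young measure means $u_h\to u$ in measure, and with the uniform bound \eqref{vacuum} this gives $u_h\to u$ and $E(u_h)\to E(u)$ in $L^q(\Omega\times(0,T))$ for every $1\le q<\infty$.

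The remaining two items reduce to this one. For \textbf{compatibility with strong solutions} I would use the DW--strong uniqueness principle (\cite[Theorem~6.2]{feireisl2021}, adapted to the present setting): Theorem~\ref{eq:theorem_weak} exhibits $(\varrho,\bm{m},\eta)$ as a DW solution with initial datum $u_0$, where the hypotheses $\varrho_0\ge\underline{\varrho}>0$ and $(1-\gamma)\eta_0\ge\underline{\varrho s}$ guarantee that the generated Young measure satisfies the admissibility constraint in \eqref{eq:Young}; hence the existence of a strong solution in the stated class forces the DW solution to coincide with it, so in particular $\mathfrak{E}\equiv0$ and $\nu$ is Dirac, and the $L^q$ convergence follows as above. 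For \textbf{compatibility with classical solutions} the weak-(*) limit is, by assumption, a weak solution with $C^1$ regularity and $\varrho\ge\overline{\varrho}>0$; by the regularity criterion \cite[Theorem~5.7]{feireisl2021} it is then a classical solution, hence a weak entropy solution, and the first part applies.

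The main obstacle is the implication $\mathfrak{E}=0\ \Rightarrow\ \nu_{\mathbf{x},t}=\delta_{u(\mathbf{x},t)}$ together with the resulting strong $L^q$ convergence: it rests on the strict convexity of $E$ in $(\varrho,\bm{m},\eta)$ on the admissible cone and on the fact that the Young measure generated by \eqref{mcl} is supported there, which is where the initial-data assumptions and the IDP property of the scheme enter. One must also take care to identify the initial datum of the generated DW solution with $u_0$ and to justify the time-wise passage to the limit in the discrete energy identity; both are carried out along the lines of \cite{feireisl2021,lukacova2023}.
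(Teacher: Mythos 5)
Your proposal is correct and follows essentially the same route as the paper's (sketched) proof: vanishing of the energy defect $\mathfrak{E}$, the sharp Jensen inequality of \cite[Lemma 7.1]{feireisl2021} with strict convexity of $E$ in $(\varrho,\bm{m},\eta)$ to collapse the Young measure to a Dirac mass, the DW--strong uniqueness principle for the strong-solution case, and the DW--classical compatibility for the last item. You in fact supply a detail the paper's sketch leaves implicit, namely that $\mathfrak{E}\equiv 0$ follows from combining the discrete energy conservation \eqref{eq:consistency_energy} with the energy conservation of the limiting weak entropy solution.
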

\begin{proof}[Sketch of the proof]
To show the compatibility with weak solutions, we notice that the defect $\mathfrak{E}$ vanishes, leading to the strong convergence of $E(u_{h_n})$ to $E(u)$ in $L^q(0,T;L^1(\Omega))$. Using the sharp form of the Jensen inequality, as stated in \cite[Lemma 7.1]{feireisl2021}, we deduce that $\nu_{\mathbf{x},t}=\delta_{u(\mathbf{x},t)}$ for almost every $(\mathbf{x},t) \in \Omega\times (0,T)$. The additional assumption that numerical solutions are uniformly bounded enables us to prove the strong convergence of $u_h$ to a weak solution $u$.

If a strong solution to the Euler system exists, we can apply the DW-strong uniqueness principle; see~Remark~5. Consequently, we have $\nu_{\mathbf{x},t}=\delta_{u(\mathbf{x},t)}$, $\mathfrak{R}=0$, and $u$ is a strong solution. Since the limit is unique, the entire sequence $u_h$ converges strongly to the strong solution.

The validity of the last claim follows directly from the compatibility between a DW and a classical solution for functions that meet our regularity assumptions; see~Remark~5.
\end{proof}

\section{Conclusions}
\label{sec:conclustions}

This work sets up a framework for numerical analysis of structure-preserving
finite element methods in the context of nonlinear hyperbolic problems. In
particular, we analyzed a semi-discrete scheme that uses a monolithic convex
limiting strategy to ensure preservation of invariant
domains and entropy stability. 
Our analysis revealed that the latter property imposes an upper bound on the spatial variation of numerical solutions. An important consequence of this result is the consistency of the MCL method for general hyperbolic conservation laws. Assuming strong convergence, we improved the Lax-Wendroff-type theorem presented in \cite{entropyFD} by exploiting entropy stability instead of making
assumptions that are too restrictive or difficult to verify. The main
result of this preliminary analysis is summarized in Theorem~\ref{TH:LW}.
%

In the second part of this paper, we focused our attention on the semi-discrete MCL scheme for the multi-dimensional Euler equations. To prove convergence, we adopted the framework of dissipative weak solutions and extended our consistency analysis to weakly convergent sequences in Theorem~\ref{th_consistency}. Exploiting the stability and consistency of the method under investigation, we were able to obtain both weak and strong convergence results for flux-corrected finite element discretizations of the Euler system (see ~Theorems \ref{eq:theorem_weak} and~\ref{th_convergence}, respectively).

 It is worth mentioning that the concept of dissipative weak solutions is not limited to the Euler equations, as it has also been derived for the compressible Navier--Stokes equations (see ~Feireisl, Luk\'a\v{c}ov\'a-Medvid'ov\'a et al.~\cite{feireisl2021}), viscous magnetohydrodynamics flows (see Li and She~\cite{li2022}), and viscous multi-component flows (see Jin, Novotn\'y et al.~\cite{Jin2021}).
 However, to the best of our knowledge, no existing analytical or numerical results are focused on the equations of ideal magnetohydrodynamics or on inviscid multi-component/multi-phase flows. We intend to explore extensions of dissipative weak solutions to such models in the future. The chances of proving weak convergence in the multi-component scenario are particularly high, as volume fractions  can be naturally expressed in terms of measures. We also believe that the concept of dissipative weak solutions can shed additional light on the steady-state convergence behavior of structure-preserving numerical methods for nonlinear hyperbolic systems.

\section*{Acknowledgements}

The work of D.K.
was supported by the German Science Foundation (DFG) within the framework of the priority research program SPP 2410 under the grant KU 1530/30-1.
M.L.-M. was partially funded by the Mainz Institute of Multiscale Modelling and by the DFG within SFB/TRR 146, project C5 and within the SPP 2410, project LU 1470/10-1.
P.Ö. was supported by the DFG within SPP 2410, project OE 661/5-1 
(525866748) and under the personal grant 520756621 (OE 661/4-1).
M.L.-M. and P.\"O. gratefully acknowledge the support of the Gutenberg Research College,  JGU Mainz.

\end{document}